\newtheorem{theorem}{Theorem}
\newtheorem{conjecture}[theorem]{Conjecture}
\newtheorem{corollary}[theorem]{Corollary}
\newtheorem{lemma}[theorem]{Lemma}
\newtheorem{proposition}[theorem]{Proposition}
\newtheorem{remark}[theorem]{Remark}
\newcommand{\R}{\mathbb{R}}
\newcommand{\metric}{\langle \, , \, \rangle}
\newcommand{\disp}{\displaystyle}
\newcommand{\ra}{\rightarrow}
\newcommand{\eps}{\varepsilon}
\newcommand{\Sph}{\mathbb{S}}
\newcommand{\di}{\mathrm{d}}
\newcommand{\Ricc}{\mathrm{Ric}}
\newcommand{\Riem}{\mathrm{Riem}}
\newcommand{\vol}{\mathrm{vol}}
\newcommand{\lip}{\mathrm{Lip}}
\newcommand{\loc}{\mathrm{loc}}
\newcommand{\Ric}{\mathrm{Ric}}
\newcommand{\RR}{\mathbb{R}}
\newcommand{\Sp}{\mathrm{Sp}}
\DeclareMathOperator{\diver}{div\,}
\DeclareMathOperator{\Hess}{\mathrm{Hess}}
\DeclareMathOperator{\tr}{\mathrm{Tr}}
\renewcommand{\div}{\mathrm{div}}
\newcommand*\owedge{\mathpalette\@owedge\relax}
\newcommand*\@owedge[1]{
	\mathbin{
		\ooalign{
			$#1\m@th\bigcirc$\cr
			\hidewidth$#1\m@th\wedge$\hidewidth\cr
		}
	}
}
\begin{document}

\title[$\varphi$-CPE metrics]{Einstein-type structures, Besse's conjecture and a uniqueness result for a $\varphi$-CPE metric in its conformal class}
\author{Giulio Colombo \and Luciano Mari \and Marco Rigoli}

\begin{abstract}
In this paper, we study an extension of the CPE conjecture to manifolds $M$ which support a structure relating curvature to the geometry of a smooth map $\varphi : M \to N$. The resulting system, denoted by \eqref{phCPE}, is natural from the variational viewpoint and describes stationary points for the integrated $\varphi$-scalar curvature functional restricted to metrics with unit volume and constant $\varphi$-scalar curvature. We prove both a rigidity statement for solutions to \eqref{phCPE} in a conformal class, and a gap theorem characterizing the round sphere among manifolds supporting \eqref{phCPE} with $\varphi$ a harmonic map.   
\end{abstract}

\maketitle

\section{Introduction}

The Critical Point Equation, from now on the \ref{CPE} equation, is the Euler-Lagrange equation of the Hilbert-Einstein action on the space of Riemannian metrics with unit volume and constant scalar curvature on a compact manifold. It has been introduced, in the attempt to more efficiently identify Einstein metrics, by A. Besse in his treatise, \cite{besse87}, to which we refer for details. From now on $(M,\metric)$ will denote a connected  Riemannian manifold of dimension $m\geq2$. The \ref{CPE} equation is the following system of PDEs:
\begin{equation} \label{CPE}\tag{CPE}
	\Hess(w) - w \left( \Ricc - \frac{S}{m-1} \, \metric \right) = T
\end{equation}
for some $w\in C^\infty(M)$ (we shall not be interested in further constraints on $w$, see \cite{besse87}). Here, $\Ricc$, $T$ and $S$ denote, respectively, the Ricci, the traceless Ricci tensors and the scalar curvature of $(M,\metric)$.

Besse's conjecture (or at least a version of it) can be stated as follows:
\begin{conjecture} \label{besse_conj}
	If $(M,\metric)$ is compact, $S$ is constant and $w \not \equiv -1$ is a smooth solution to \eqref{CPE} on $M$, then $(M,\metric)$ is Einstein.
\end{conjecture}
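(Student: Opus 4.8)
The plan is to run the classical integral-identity argument standard in the study of \eqref{CPE}, which reduces the conjecture to controlling the sign of $1+w$. First I would take the trace of \eqref{CPE}: since $\tr T = 0$ and $\tr\bigl(\Ricc - \tfrac{S}{m-1}\,\metric\bigr) = -\tfrac{S}{m-1}$, one gets $\Delta w = -\tfrac{S}{m-1}\,w$ on the compact $M$. Integrating, $\int_M w = 0$ whenever $S \ne 0$. If $w$ is constant, then $\Delta w = 0$ and this same identity already gives $wS \equiv 0$: either $w \equiv 0$, in which case \eqref{CPE} reads $0 = T$ and $M$ is Einstein, or $S \equiv 0$, in which case \eqref{CPE} collapses to $(1+w)\Ricc = 0$, forcing $\Ricc \equiv 0$ because $w \not\equiv -1$. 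If instead $w$ is nonconstant, then $-\tfrac{S}{m-1}$ is a negative eigenvalue of the Laplacian, so $S > 0$. Hence I may assume $w$ nonconstant, $S$ a positive constant, and $\int_M w = 0$.

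Next I would record that, $S$ being constant, the contracted second Bianchi identity gives $\div T = \tfrac{m-2}{2m}\,\di S = 0$, so $T$ is divergence-free. The crucial step is then to contract \eqref{CPE} with $T$ itself: writing $\Ricc = T + \tfrac{S}{m}\,\metric$ one has $\bigl\langle \Ricc - \tfrac{S}{m-1}\,\metric,\, T \bigr\rangle = |T|^2$, so \eqref{CPE} becomes, pointwise,
\[
\langle \Hess w,\, T\rangle = (1+w)\,|T|^2 .
\]
Integrating over $M$ and using $\int_M \langle \Hess w, T\rangle = -\int_M \langle \nabla w, \div T\rangle = 0$, one arrives at
\[
\int_M (1+w)\,|T|^2 \, \di V = 0 .
\]
If one could prove $1+w \ge 0$ throughout $M$, this would immediately force $T \equiv 0$, i.e. $(M,\metric)$ Einstein, and the conjecture would follow.

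The whole difficulty is therefore concentrated on the region $\{w < -1\}$ and the nodal set $\{w = -1\}$: the constraint $\int_M w = 0$ does not by itself prevent $w$ from dropping below $-1$, so the sign of the integrand above is not controlled a priori, and this is exactly the obstruction that keeps the conjecture open. To push further I would look for a second, complementary integral identity — obtained by differentiating \eqref{CPE} once more and pairing the result with $T$, with $wT$, and with the Bach and Cotton tensors — so that, under an auxiliary hypothesis such as Bach-flatness, harmonic Weyl curvature, a pointwise curvature pinching, or smallness of a scale-invariant norm of $T$, it combines with the identity above to force $T \equiv 0$. Without such a hypothesis, controlling $\{w < -1\}$ by means of the eigenfunction equation $\Delta w = -\tfrac{S}{m-1}w$ together with gradient and Harnack-type bounds for $w$ is the genuine hard part; this is why the present paper, in line with the known literature, proves its rigidity statement only inside a fixed conformal class and its sphere characterization only under a gap condition, rather than in the full generality of the conjecture stated above.
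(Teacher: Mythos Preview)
The statement is a \emph{conjecture} and the paper does not prove it; it only surveys the known partial results (items i)--x) in the introduction) and then establishes its own theorems under extra hypotheses. Your proposal correctly recognizes this: you derive the Hwang integral identity $\int_M (1+w)|T|^2 = 0$, pinpoint the sign of $1+w$ on the region $\{w<-1\}$ as the genuine obstruction, and explicitly say the conjecture remains open. This is exactly the content of item iii) in the paper's list and of the surrounding discussion, so there is no ``paper's proof'' to compare against --- your outline and the paper's exposition agree.

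Two minor remarks. First, your integration-by-parts route to $\int_M (1+w)|T|^2 = 0$ is the integrated form of the pointwise identity $\div(T(\nabla w,\cdot)^\sharp) = (1+w)|T|^2$ that the paper quotes from Hwang and generalizes in Lemma~\ref{lem_divT}; the pointwise version is slightly sharper and is what actually gets used downstream. Second, even granting $w \ge -1$, passing from $(1+w)|T|^2 \equiv 0$ to $T \equiv 0$ requires that $\{w=-1\}$ have measure zero; you allude to this nodal set but do not argue it. The paper supplies that step (critical points on the level set are nondegenerate hence isolated, and off them the level set is a smooth hypersurface) in the proof of Lemma~\ref{lem_claim}.
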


Constant solutions are easily handled: if $w$ is a constant different from $-1$, then \eqref{CPE} implies that $M$ is Einstein (indeed, Ricci flat if $w \neq 0$); on the other hand, if $w \equiv -1$ then \eqref{CPE} is equivalent to $S \equiv 0$. 
%We note that, if $S\not\equiv0$, the condition $w\not%\equiv0$ is equivalent to $w$ being a non-constant %solution of \eqref{CPE}. 

In order to derive \eqref{CPE} we have assumed from the very beginning that $S$ is constant, but it is worth to observe that the mere existence of a solution $w$ of \eqref{CPE} implies the constancy of $S$ (this will be shown in Proposition \ref{prop_DS} below, in a more general setting). Taking this into account, with the aid of a result of Obata \cite{obata62} we may state the following form of Besse's conjecture, which up to removing the case of constant $w$ is equivalent to the original formulation:
\begin{conjecture}
	If $(M,\metric)$ is compact and $w$ is a non-constant solution to \eqref{CPE} on $M$, then $(M,\metric)$ is isometric to a standard sphere.
\end{conjecture}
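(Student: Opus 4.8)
The plan is to reduce the statement to the Einstein case and then invoke Obata's theorem \cite{obata62}. Taking the trace of \eqref{CPE}, and recalling that the traceless Ricci tensor $T$ satisfies $\tr T=0$, one finds $\Delta w+\frac{S}{m-1}w=0$; since $S$ is constant (Proposition \ref{prop_DS}) and $w$ is nonconstant on a compact manifold, this forces $S>0$, while $w$ is an eigenfunction of $-\Delta$ with eigenvalue $S/(m-1)$ (in particular $\int_M w\,\di V=0$). Rewriting \eqref{CPE} as $\Hess w+\frac{Sw}{m(m-1)}\metric=(1+w)T$ shows that, if one can prove $T\equiv 0$, i.e. that $(M,\metric)$ is Einstein, then $\Hess w=-\frac{S}{m(m-1)}w\,\metric$ with $\frac{S}{m(m-1)}>0$, and Obata's theorem immediately yields that $(M,\metric)$ is isometric to a round sphere. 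Thus the entire content of the conjecture is concentrated in establishing the Einstein property, which is exactly Besse's Conjecture \ref{besse_conj}.

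The first step toward $T\equiv 0$ is the fundamental integral identity. Since $S$ is constant, the contracted second Bianchi identity gives $\div T=\frac{m-2}{2m}\nabla S=0$. Contracting $(1+w)T=\Hess w+\frac{Sw}{m(m-1)}\metric$ with $T$ and using $\tr T=0$ produces the pointwise relation $\langle\Hess w,T\rangle=(1+w)|T|^2$, whence, integrating by parts and using $\div T=0$,
\begin{equation}\label{fundid}
\int_M (1+w)\,|T|^2\,\di V=0.
\end{equation}
Were $1+w$ of a definite sign this would finish the argument, but in general $1+w$ changes sign, and overcoming this is the crux of the difficulty.

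To extract more, I would iterate weighted divergence identities and couple them with a Weitzenböck formula for the divergence-free traceless tensor $T$. Applying the divergence theorem to the vector field $(1+w)\,T(\nabla w,\cdot)$ yields the second relation $\int_M\big[(1+w)^2|T|^2+T(\nabla w,\nabla w)\big]\,\di V=0$, in which $T(\nabla w,\nabla w)$ can be re-expressed through \eqref{CPE} in terms of $\Hess w$ and $|\nabla w|^2$. In parallel, the Bochner–Weitzenböck identity $\frac12\Delta|T|^2=|\nabla T|^2+\langle\Delta T,T\rangle$ (with $\Delta$ the rough Laplacian), together with the commutation of derivatives and $\div T=0$, expresses $\langle\Delta T,T\rangle$ through the action of the curvature operator on $T$; decomposing $\Riem$ into its scalar, Ricci and Weyl parts gives a curvature term of the schematic form
\[
\langle\Delta T,T\rangle=c\,|T|^2-W(T,T)+(\text{terms linear in }T),
\]
where $W(T,T)$ is the full contraction of the Weyl tensor with $T\otimes T$. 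Integrating these relations against suitable powers of $1+w$ and combining them with \eqref{fundid}, the aim is to reach an identity whose left-hand side is a nonnegative quadratic expression in $\nabla T$ and $T$ and whose right-hand side is governed by $\int_M (1+w)\,W(T,T)\,\di V$, so that $T\equiv 0$ would follow once this Weyl contribution is shown to have the correct sign.

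The main obstacle, which is precisely why the conjecture remains open, is twofold: the weight $1+w$ in \eqref{fundid} is sign-indefinite, so none of the identities above yields a conclusion without extra input, and the Weyl contraction $W(T,T)$ is uncontrolled when $m\ge 4$. This explains why all known affirmative results impose a hypothesis that neutralizes the Weyl term—local conformal flatness, harmonic Weyl tensor ($\div W=0$), Bach-flatness, or a fourth-order divergence condition—or else restrict to low dimension together with a pinching assumption: under any such hypothesis the Cotton tensor is controlled, the schematic identity closes with a favorable sign, $T\equiv 0$ follows, and the round sphere is obtained via the reduction of the first paragraph. The unconditional statement, however, appears to require a genuinely new idea to dispense with these curvature assumptions, and I expect that to be the essential difficulty.
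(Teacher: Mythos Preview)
This statement is a \emph{conjecture}, not a theorem; the paper does not prove it, and neither do you. What the paper actually establishes immediately after the statement is the reduction you carry out in your first paragraph: trace \eqref{CPE} to get $\Delta w+\frac{S}{m-1}w=0$, deduce $S>0$ from \eqref{eq_intide} and the nonconstancy of $w$, and observe that once $(M,\metric)$ is Einstein the equation becomes $\Hess(w)=-\frac{S}{m(m-1)}w\,\metric$, so Obata's theorem yields the round sphere. This is exactly the paper's argument for the equivalence with Conjecture~\ref{besse_conj}, and your version of it is correct.

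Everything from your identity \eqref{fundid} onward is additional commentary that goes beyond what the paper does at this point. Your derivation of \eqref{fundid} is correct and coincides (for constant $\varphi$) with Hwang's identity, recalled in item iii) of the introduction and generalized in Lemma~\ref{lem_divT}. Your subsequent sketch---coupling weighted divergence identities with a Weitzenb\"ock formula for $T$ and isolating a Weyl contribution---is a fair description of the strategy behind several of the partial results listed in the introduction (and indeed of the proof of Theorem~\ref{thm_Sph} later in the paper), and your diagnosis of the two obstructions, the sign-indefiniteness of $1+w$ and the uncontrolled Weyl term, is accurate. But none of this constitutes a proof of the conjecture, as you yourself acknowledge; it is an honest account of why the problem is open, consistent with the paper's presentation.
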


Indeed, tracing \eqref{CPE} we get
\begin{equation} \label{trCPE1}
	\Delta w + \frac{S}{m-1} w = 0,
\end{equation}
thus integrating by parts yields
	\begin{equation}\label{eq_intide}
	\int_M \frac{S}{m-1} w^2 = \int_M |\nabla w|^2 \, .
	\end{equation}
Since $w$ is non-constant and $S$ is constant it follows that $S>0$. If $(M,\metric)$ is Einstein, \eqref{CPE} reduces to 
\[
	\Hess(w) = - \frac{S}{m(m-1)} w \metric \, ,
\]
whence using Theorem A of \cite{obata62} we obtain that $M$ is isometric to a round sphere.

There are a number of partial results on Besse's  conjecture, and we list some of them. Precisely, the conjecture is true if one of the following sets of assumptions is satisfied on $M$ compact:
\begin{itemize}
	\item[i)] $\frac{S}{m-1} \not\in \Sp(-\Delta)$ (cf. \cite[Proposition 4.47]{besse87});
	\item[ii)] $(M,\metric)$ is locally conformally flat and the solution $w$ to \eqref{CPE} is not unique (Lafontaine \cite{laf83}). The result was improved by removing the second assumption (LaFontaine-Rozoy \cite{blr} for $m=3$, and Chang-Hwang-Yun \cite{chy12});
	\item[iii)] $w\geq -1$ (Hwang \cite{hwang00}). It is worth to observe that this result follows from the very interesting identity 
	$$
		\div(T(\nabla w,\,\cdot\,)^\sharp) = (1+w)|T|^2
	$$
	and from the fact that, if $w \not \equiv -1$, $\{x\in M : w(x) = -1\}$ has measure $0$. Here $^\sharp$ is the musical isomorphism (in Lemma \ref{lem_divT} below we shall generalize the above identity);
%	\item[iv)] $\metric$ is real analytic (Lafontaine-%Rozoy, 2000, \cite{blr})
	\item[iv)] $\div\Riem = 0$ (Yun-Chang-Hwang  \cite{ych14});
	\item[v)] $(M,\metric)$ is Bach flat (Qing-Yuan \cite{qy13});
	\item[vi)] $m=4$ and $\diver W^+\equiv0$, where $W^+$ is the self-dual part of the Weyl tensor $W$ (Barros-Leandro-Ribeiro \cite{blr15});
	\item[vii)] $(M,\metric)$ is conformally Einstein (Barros-Evangelista \cite{be18});
	\item[viii)] $m \ge 5$ and the radial Weyl curvature $i_{\nabla w}W = 0$ (Baltazar-Barros-Batista-Viana \cite{bbbv20});
	\item[ix)] $m\geq 3$ and condition
	\begin{equation}\label{cond_balta}
		|W| \leq \sqrt{\frac{m}{2(m-2)}} \left[ \frac{S}{\sqrt{m(m-1)}} - 2|T| \right]
	\end{equation}
	holds (Baltazar \cite{bal20});
	\item[x)] $m=3$ and $\Ricc\geq0$ (He \cite{he21}).
\end{itemize}

Furthermore, in a very recent preprint, Hwang and Yu \cite{hwangyu} showed that the CPE conjecture holds if $\metric$ has positive isotropic curvature.

The \ref{CPE} equation is strictly related to the vacuum static equation
\begin{equation} \label{VSE}\tag{VSE}
	\Hess(w) - w \left( \Ricc - \frac{S}{m-1} \metric \right) = 0 \, ,
\end{equation}
where we consider smooth solutions $w\not\equiv0$. Indeed, observe that if $M$ admits two different solutions $w_0,w_1$ to \eqref{CPE}, then for any $t \in \R$ the function $w_t = (1-t)w_0 + t w_1$ solves \eqref{CPE} and its $t$-derivative $w_1-w_0$ is a non-trivial solution to \eqref{VSE}. Although \eqref{VSE} can be seen as the Euler-Lagrange equation of an action functional over a certain space of metrics with constant scalar curvature,  similarly to \eqref{CPE} one verifies that the sole existence of a non-trivial solution $w$ to \eqref{VSE} on $M$ implies that the scalar curvature is constant, see \cite{cem13}.

Suppose $m\geq 3$. A recent result of Herzlich, \cite{herz16}, provides a nice class of solutions to \eqref{VSE}. Indeed, he shows that if $X$ is a conformal vector field on an Einstein manifold $(M,\metric)$, then
$$
	w = \div X
$$
is a solution to \eqref{VSE} which is non-trivial so long as $X$ is not a Killing field. 
%
%     CAN WE REMOVE IT??????
%%
%To see this, set $g$ for the metric $\metric$ and, denoting with $S_2(M)$ the space of $2$-covariant %symmetric tensors, let
%$$
%	D S^\ast_g : C^\infty(M) \to S_2(M)
%$$
%be the formal $L^2$-adjoint of the linearization of the scalar curvature map at $g$. Fisher and Marsden, \cite{fm75}, have proved that
%\begin{equation}
%	D S^\ast_g(w) = \Hess_g(w) - w \left( \Ricc_g - \frac{S_g}{m-1} g \right) \, .
%\end{equation}
%Assuming that $g$ has normalized constant scalar curvature
%$$
%	S_g = m(m-1)\lambda \, ,
%$$
%$\lambda\in\{0,-1,1\}$ and that $X$ is conformal, in a recent paper Miao and Tam, \cite{mt17}, proved the validity of the general formula
%\begin{equation} \label{DSast_divX}
%	D S^\ast_g(\div_g X) = -\frac{m}{m-2} \left[ \LL_X G + \frac{m-2}{m} \left( \div_g X \right) G \right] \, ,
%\end{equation}
%where $\LL_X$ is the Lie derivative in the direction of $X$ and $G$ is the $2$-covariant tensor
%\begin{equation}
%	G = \Ricc_g - \frac{1}{2}\left[ S_g - (m-1)(m-2) \lambda \right] g \, .
%\end{equation}
%It is immediate to verify that, when $(M,g)$ is Einstein, $G\equiv 0$ and from \eqref{DSast_divX} we recover Herzlich result.
%
In fact, when $M$ is compact with $\partial M\neq\emptyset$, assuming the existence of $X$ as above and constancy of $S$, Miao and Tam, \cite{mt17}, were able to prove, under some further assumptions, that if $\div X$ solves \eqref{VSE} then $(M,g)$ is Einstein, providing a partial converse of Herzlich result.

%\begin{remark}
%	It is an amazing and intriguing fact that the VSE equation, coming from General Relativity, indeed characterizes the kernel of $D S^\ast_g$.
%\end{remark}

An interesting problem related to the \ref{VSE} equation is that of the local scalar curvature rigidity; that is, to look for domains $\Omega$ in $(M^m,\metric)$ such that for each metric $g$ inducing the same metric as $\metric$ on $\partial\Omega$ and such that $S_g \geq S_{\metric}$ on $\Omega$ and $H_g = H_{\metric}$ on $\partial\Omega$, $H$ the mean curvature of $\partial\Omega$ with respect to the inward pointing normal, there exists $\eps>0$ for which the condition
$$
	\| g - \metric \|_{C^2(\Omega)} < \eps
$$
implies the existence of a diffeomorphism $\psi : \overline{\Omega} \to \overline{\Omega}$ with the property that $\metric = \psi^\ast g$ and $\psi \equiv \mathrm{id}$ on $\partial\Omega$. Here and in what follows, agreeing with most of the literature we adopt the convention that the mean curvature $H_h$ of $\partial \Omega$ in a given metric $h$ is normalized and taken with respect to the the inward pointing direction, namely, 
	\begin{equation}\label{eq_conveH}
	H_h = \frac{\diver_h\nu_h}{m-1}, \qquad \text{$\nu_h$  the outward unit normal to $\partial \Omega \hookrightarrow (\overline{\Omega},h)$}.
	\end{equation}
The above problem is closely related to the well known conjecture of Min-Oo on $\Sph^m_+$, which in its full generality was disproved by Brendle, Marques and Neves, \cite{bmn11}. However, Hang and Wang, \cite{hw06}, obtained a positive answer to a weaker form of Min-Oo's conjecture, proving the scalar curvature rigidity among conformal metrics for the round hemisphere $\Sph^m_+$. The result has been recently extended by Qing and Yuan \cite{qy16}, Yuan \cite{y17} and Barbosa, Mirandola and Vitorio \cite{bmv} to a manifold with a solution of \eqref{VSE}, that they more simply call a vacuum static space. In particular, we have the following elegant result, that we rephrase to facilitate its comparison with our Theorem \ref{thm_phS_uniq} below:
\begin{theorem}[Corollary 7 in \cite{bmv}] \label{thm_qy}
	Let $(M,g)$ be a complete vacuum static space with $w\not\equiv0$ solution to \eqref{VSE} and scalar curvature $S_g \ge 0$. Let 
	\[
	\Omega \subset \big\{ x : w(x) > 0 \big\}
	\] 
be a relatively compact, connected open set with smooth boundary. If $\tilde g$ is conformal to $g$ on $\overline{\Omega}$ and it satisfies
	$$
		\begin{cases}
			S_{\tilde g} \geq S_g & \text{on } \, \Omega \\
			\tilde g \equiv g & \text{on } \, \partial\Omega \\
		\end{cases}
	$$
then 
	\[
	\tilde g \ge g \quad \text{on } \, \Omega, \qquad H_{\tilde g} \le H_g \quad \text{on } \, \partial \Omega.
	\]
Furthermore, both inequalities are strict unless $\tilde g=g$ on $\overline\Omega$.
\end{theorem}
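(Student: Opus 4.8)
The plan is to assume $m\ge 3$ (the range where \eqref{VSE} is of interest), write the conformal factor in Yamabe form $\tilde g=u^{4/(m-2)}g$ with $u>0$ smooth on $\overline\Omega$, and reduce the statement to a single scalar inequality that can then be paired with the static potential. Putting $c_m:=\tfrac{4(m-1)}{m-2}$ and $p:=\tfrac{m+2}{m-2}$, the boundary condition becomes $u\equiv1$ on $\partial\Omega$, while $S_{\tilde g}\ge S_g$ reads, via the conformal Laplacian,
\[
-c_m\Delta_g u + S_g\,u \;=\; S_{\tilde g}\,u^{p}\;\ge\; S_g\,u^{p}\qquad\text{on }\Omega,
\]
that is, $-c_m\Delta_g u\ge S_g(u^{p}-u)$. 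From $w$ I will use only the trace of \eqref{VSE}, namely $\Delta_g w=-\tfrac{S_g}{m-1}w$, together with $w>0$ on $\Omega$ and $w\ge0$ on $\overline\Omega$ (the latter because $\partial\{w>0\}\subseteq\{w=0\}$).

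First I would prove $u\ge1$ on $\Omega$. Assume not, so that $\Omega_-:=\{u<1\}$ is a nonempty, relatively compact open subset of $\Omega$ with $u\equiv1$ on $\partial\Omega_-$. By Sard's theorem choose regular values $t_n\uparrow1$ of $u$ and work on the smooth domains $\Omega_n:=\{u<t_n\}$, whose closures lie in $\Omega$. Multiplying $-c_m\Delta_g u\ge S_g(u^p-u)$ by $w$, integrating over $\Omega_n$, and using Green's identity (with $u\equiv t_n$ on $\partial\Omega_n$), the divergence theorem for $\nabla_g w$, and $\Delta_g w=-\tfrac{S_g}{m-1}w$, the bulk terms recombine — here one uses the identity $\tfrac{c_m}{m-1}+1=p$, special to the conformal Laplacian — into
\[
-c_m\int_{\partial\Omega_n}w\,\partial_\nu u \;\ge\; \int_{\Omega_n} S_g\,w\,\Big(h(u)-\tfrac{4}{m-2}(1-t_n)\Big),\qquad h(s):=s^{p}-p\,s+\tfrac{4}{m-2}.
\]
On $\partial\Omega_n$ the outward normal derivative $\partial_\nu u$ is $\ge0$, so the left-hand side is $\le0$; moreover $h(1)=0$ and $h'(s)=p(s^{p-1}-1)$ vanishes only at $s=1$, so $h\ge0$ on $(0,\infty)$, with $h>0$ away from $s=1$. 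Letting $n\to\infty$ — monotone convergence on the left, while $1-t_n\to0$ and $\int_{\Omega_-}S_g\,w<\infty$ control the right — gives $\int_{\Omega_-}S_g\,w\,h(u)\le0$, hence $S_g\equiv0$ on $\Omega_-$. Then $-c_m\Delta_g u\ge0$ on $\Omega_-$, so $u$ is superharmonic there with boundary value $1$, forcing $u\ge1$ on $\Omega_-$ — a contradiction. Therefore $u\ge1$, that is $\tilde g\ge g$, on $\Omega$; and since $u\equiv1$ on $\partial\Omega$, the conformal transformation law for the normalized mean curvature, $H_{\tilde g}=H_g+\tfrac{2}{m-2}\partial_\nu u$ on $\partial\Omega$, together with $\partial_\nu u\le0$ there, gives $H_{\tilde g}\le H_g$ on $\partial\Omega$.

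For the strict alternative I would argue: if $u\not\equiv1$ then, since $u\ge1$ and $p>1$, one has $S_g(u^p-u)\ge0$, hence $\Delta_g u\le0$; thus $v:=u-1\ge0$ is superharmonic on the connected set $\Omega$ and vanishes on $\partial\Omega$. Were $v$ to vanish at an interior point it would vanish identically by the strong minimum principle, against $u\not\equiv1$; hence $v>0$ on $\Omega$, that is $\tilde g>g$ on $\Omega$, and Hopf's lemma at each point of $\partial\Omega$ gives $\partial_\nu v<0$, that is $H_{\tilde g}<H_g$ on $\partial\Omega$.

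The one genuinely delicate point should be the choice of test function in the first step: pairing the conformally transformed scalar-curvature inequality with $w$ itself — not with $u-1$ or a truncation of it — and then invoking the trace of \eqref{VSE} is exactly what collapses the bulk term into $\int S_g\,w\,h(u)$ with $h$ globally nonnegative and with a double zero at $s=1$ (which is why the crude estimate at levels $t_n<1$ still closes in the limit). The constants match only because of the identity $\tfrac{c_m}{m-1}+1=p$. Everything else — Sard's theorem to smooth the level sets of $u$ and pass to the limit, the weak and strong maximum principles, Hopf's lemma, and the standard conformal formulas for $S$ and $H$ — is routine.
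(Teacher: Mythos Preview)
Your proof is correct, and it takes a genuinely different route from the one the paper uses (the paper does not prove Theorem~\ref{thm_qy} directly but derives it as a special case of Theorem~\ref{thm_phS_uniq}). Both arguments start from the same conformal Yamabe inequality $-c_m\Delta_g u\ge S_g(u^p-u)$ and both feed in the static potential through the trace $\Delta_g w=-\tfrac{S_g}{m-1}w$, but they diverge at the key step. The paper sets $v=1-u$, forms the quotient $\zeta_\delta=v/(w+\delta)$, derives a \emph{pointwise} differential inequality $w_\delta^{-2}\diver(w_\delta^2\nabla\zeta_\delta)\ge(c-f_\delta)\zeta_\delta$ on $\{v>0\}$, and invokes an $L^2$ Liouville lemma (Lemma~\ref{L2_liou}) to force $v\le0$. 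You instead pair the scalar inequality with $w$ via Green's identity on sublevel sets $\{u<t_n\}$, and the identity $\tfrac{c_m}{m-1}+1=p$ collapses the bulk into $\int S_g\,w\,h(u)$ with $h(s)=s^p-ps+(p-1)$ convex and nonnegative with a double zero at $s=1$; positivity of the integrand then forces $S_g h(u)\equiv0$ on $\{u<1\}$, and a standard superharmonic argument closes the case $S_g=0$. Your approach is more elementary---no auxiliary Liouville theorem, no $\delta$-regularization---and transparently isolates why the exponent $p=\tfrac{m+2}{m-2}$ is the right one; the paper's approach, on the other hand, is set up precisely so that it survives when $\Omega$ is \emph{not} relatively compact (under the volume growth hypotheses \eqref{phS_vol_1}--\eqref{phS_vol_0}) and when the structure is deformed to the $\varphi$-setting, which is the paper's actual target. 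For the strict alternative both proofs are essentially the same: superharmonicity of $u-1$, the strong minimum principle, and Hopf's lemma.
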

Taking traces in \eqref{VSE}, notice that $w$ is a positive solution of $\mathscr{L}w = 0$ on the set
	\[
	\Omega_+ = \big\{ x \in M \ : \ w(x) > 0\big\},
	\]
where 
	\[
	\mathscr{L} = - \Delta - \frac{S_g}{m-1},
	\]
therefore $\lambda_1(\mathscr{L},\Omega) \ge \lambda_1(\mathscr{L},\Omega_+) \ge 0$, where $\lambda_1$ is the bottom of the spectrum with Dirichlet boundary conditions. We also stress that, in \cite{qy16}, the authors prove that $\Omega_+$ is ``maximal'' for the validity of the result. \\
\par
%
%
%\begin{remark}
%\emph{Related interesting results can be found in \cite{bmv}. In \cite[Thm. 5.1]{qy16}, the authors addressed the rigidity property in Theorem \ref{thm_qy} for the whole of $\Omega_+$, assuming that it is relatively compact (so $\lambda_1(\mathscr{L},\Omega_+) = 0$ and $w$ is a first eigenfunction of $\mathscr{L}$). They proved the rigidity $\tilde g = g$ assuming $H_{\tilde g} \ge H_g$. More comments on this result will be given in Section \ref{sec_thm3}. \tcb{[In effetti in Section \ref{sec_thm3} non si commenta ulteriormente su questo punto di \cite{qy16}]}
%	}
%\end{remark}			
%
Recent years saw a rising interest in manifolds whose curvatures relate to properties of a smooth map $\varphi : (M,\metric) \to (N,\metric_N)$ into a target Riemannian space. One of the first instances of such interplay is the work of Buzano \cite{mul12}, where the author investigated the Ricci flow coupled with the harmonic map flow. Solitons for the flow are characterized by the system
\begin{equation}
	\begin{cases}
		\Ricc^\varphi + \Hess(f) = \lambda \metric \\
		\tau(\varphi) = \di\varphi(\nabla f),
	\end{cases}
\end{equation}
where $\alpha, \lambda \in \R$, $\tau(\varphi)$ is the tension field of $\varphi$ (see \cite{el88} and the beginning of Section \ref{sec_thm_Sph}) and $\Ric^\varphi$ is the $\varphi$-Ricci tensor
\begin{equation}
	\Ricc^\varphi = \Ricc - \alpha\varphi^\ast\metric_N \, .
\end{equation}
See also Wang \cite{wang16} for related results. For constant $f$, the above reduces to the harmonic-Einstein system 
\begin{equation} \label{harm_Ein}
	\begin{cases}
		\Ricc^\varphi = \lambda\metric \\
		\tau(\varphi) = 0 \, 
	\end{cases}
\end{equation}
which extends the notion of Einstein manifolds to possibly nonconstant $\varphi$ (as in the Einstein case, by \cite[Proposition 2.15]{acr21}, if $m \ge 3$ then $\lambda$ is necessarily constant). The interest in \eqref{harm_Ein} is made even more evident if we rewrite the first identity as
	\begin{equation}\label{eq_Geinstein}
	G + \Lambda \metric = \alpha \overline{T},
	\end{equation}
where $G$ is the Einstein tensor of $M$, 
	\[
	\Lambda = \frac{m-2}{2} \lambda
	\]
and $\overline{T}$ is the stress-energy tensor\footnote{Notice that in \cite{acr21}, after equation (1.5), there is a typo in the definition of the stress-energy tensor.} of the map $\varphi$:
	\[ 
	\overline{T} \doteq \varphi^\ast\metric_N - \frac{|\di\varphi|^2}{2} \metric \, .
	\]
Notice that we did not use the fact that $\metric$ is Riemannian. Hence, in a Lorentzian setting, solutions to \eqref{harm_Ein} with $\alpha >0$ correspond to solutions to the Einstein field equation with cosmological constant $\Lambda$ and source the wave map $\varphi$, up to a normalization constant. The fact that the left hand side of \eqref{eq_Geinstein} is divergence free forces $\overline{T}$ to be divergence free as well, which is equivalent to the vanishing of the $1$-form $\langle \tau(\varphi),\di \varphi \rangle_N$. The harmonicity of $\varphi$ is then a sufficient condition for the compatibility of the system. Other examples and more detailed  discussions can be found in \cite{a21,acr21}.	\\
\par 
%		
%In what follows we subsume the previous setting under a more general context that we are going to introduce.
%
%
%
% We fix a smooth map $\varphi : (M,\metric) \to (N,\metric_N)$ and a constant $\alpha\in\RR$, \tcb{$\alpha \neq 0$}. Associated to $\varphi$ and $\alpha$ we define new ``curvature'' tensors that we shall call $\varphi$-curvatures. The prototype is the $\varphi$-Ricci tensor
%\begin{equation}
%	\Ricc^\varphi = \Ricc - \alpha\varphi^\ast\metric_N \, ,
%\end{equation}
%that goes back to the study of harmonic-Ricci solitons introduced by Buzano \cite{mul12}, see also Wang, \cite{wang16}. 
%
In what follows, we shall investigate the CPE problem in the more general setting just mentioned. To properly define the system corresponding to \eqref{CPE}, first recall the obvious definitions of the $\varphi$-scalar curvature and traceless $\varphi$-Ricci tensor:
\begin{equation}
	S^\varphi \doteq \tr \Ricc^\varphi = S - \alpha |\di \varphi|^2, \qquad T^\varphi \doteq \Ricc^\varphi - \frac{S^\varphi}{m} \metric \, .
\end{equation}
Associated to $\varphi$ and $\alpha$, further ``curvature'' tensors that we shall call $\varphi$-curvatures will be introduced below at due time. For more information we refer to \cite{a21,acr21,mr20} where we justify the various concepts and prove a number of results.

We formally introduce the $\varphi$-CPE equation, that we shall also call a $\varphi$-CPE structure, by requiring the existence of $w\in C^\infty(M)$ solving
\begin{equation} \label{phCPE}\tag{$\varphi$-CPE}
	\begin{cases}
		\Hess(w) - w\left( \Ricc^\varphi - \frac{S^\varphi}{m-1} \metric \right) = T^\varphi \\
		(1+w) \tau(\varphi) = -\di\varphi(\nabla w)
	\end{cases}
\end{equation}
The system \eqref{phCPE} will be justified from a variational viewpoint in Section \ref{sec_variational}. We remark that, as in the case of constant $\varphi$, the mere validity of \eqref{phCPE} implies that $S^\varphi$ is constant, see Proposition \ref{prop_DS} below. If $w> -1$, performing the change of variable
\begin{equation} \label{fw_chvar}
	f = -\log(1+w)
\end{equation}
\eqref{phCPE} becomes equivalent to the Einstein-type structure
\begin{equation} \label{Ein-type}
	\begin{cases}
		\Ricc^\varphi + \Hess(f) - \mu \di f \otimes \di f = \lambda \metric \\
		\tau(\varphi) = \di\varphi(\nabla f)
	\end{cases}
\end{equation}
with the choices $\mu=1$ and
$$
	\lambda(x) = \frac{S^\varphi}{m-1} \left( 1 - \frac{e^f}{m} \right) \, .
$$
Similarly, if $w>0$ the $\varphi$-VSE equation
\begin{equation} \label{phVSE}\tag{$\varphi$-VSE}
	\begin{cases}
		\Hess(w) - w\left( \Ricc^\varphi - \frac{S^\varphi}{m-1} \metric \right) = 0 \\
		w \tau(\varphi) = -\di\varphi(\nabla w)
	\end{cases}
\end{equation}
falls into the class \eqref{Ein-type} with the choices
	\[
	f = -\log w, \qquad \mu=1, \qquad \lambda(x) = - \frac{S^\varphi}{m-1} \frac{e^f}{m}.
	\]
The importance of the general Einstein-type structure \eqref{Ein-type} is evident. For instance it describes, as special cases, Ricci-harmonic solitons, Ricci solitons, generalized quasi-Einstein manifolds for $\mu = \mu(x)$ and $\lambda = \lambda(x)$, and so on. Moreover it appears quite naturally in several problems coming from Physics, see \cite{a21,acr21,mr20}.

As expected, since \eqref{Ein-type} encompasses a wide range of different structures, its validity does not force, in general, the constancy of $S^\varphi$. However, this is the case for some noticeable examples, for instance when $m \ge 3$ and \eqref{Ein-type} reduces to the harmonic-Einstein system \eqref{harm_Ein}. This parallels Schur's Theorem and is a simple but revealing instance pointing out that the theory of harmonic-Einstein manifolds, that is, of those Riemannian manifolds supporting a solution to \eqref{harm_Ein}, has many analogies with that of Einstein manifolds. Another example which is relevant for us is \eqref{phCPE} in dimension $m \ge 3$, see Proposition \ref{prop_DS} below. For results in this direction we refer to \cite{acr21}.\\
\par
Our first theorem relates to Theorem \ref{thm_qy}. To state it we need some further piece of notation. Given a metric $g$ on $M$, we set $[g]$ to denote its conformal class. If $\varphi : (M,g) \to (N,\metric_N)$ and $\tilde g \in [g]$, we denote with a tilde quantities referred to $\tilde g$, and we let
$$
	\tilde\varphi \ : \ (M,\tilde g) \to (N,\metric_N) \, , \qquad \tilde\varphi(x) \doteq \varphi(x).
$$
Also, having fixed an origin $o\in M$ we let $B_r = \{ x \in M : \mathrm{dist}_g(x,o) < r\}$.

\begin{theorem} \label{thm_phS_uniq}
	Let $(M,g)$ be a complete manifold of dimension $m\geq 3$ possessing an Einstein-type structure as in \eqref{Ein-type} with $\mu \in \R^+$ and $\lambda = \lambda(x) \in C^\infty(M)$. Assume $S^\varphi \ge 0$, and let $\Omega$ be a smooth, connected open set satisfying
	\begin{equation}
		\Omega \subset \{ x \in M : f(x) < 0 \}.
	\end{equation}
Let $\tilde g\in[g]$ satisfy
	\begin{equation} \label{parOm_cond}
		i) \; \tilde S^{\tilde\varphi} \geq S^\varphi \ \ \text{ on } \, \Omega\, ; \qquad ii) \; \tilde g \equiv g  \ \ \text{ on } \, \partial \Omega.
	\end{equation}
Assume that
	\begin{equation} \label{lambda_eps}
		\lambda(x) \geq \frac{S^\varphi}{\mu m(m-1)} \left[1 + \mu(m-1) - e^{\mu f}\right] + \eps \qquad \text{on } \, \Omega,
	\end{equation}
	for some $\eps\geq 0$. If either 
	\begin{equation} \label{phS_vol_1}
		\eps > 0 \qquad \text{and} \qquad \liminf_{r\to \infty} \frac{\log |B_r\cap\Omega|}{r} = 0
	\end{equation}
	or
	\begin{equation} \label{phS_vol_0}
		\eps = 0 \qquad \text{and} \qquad \liminf_{r\to \infty} \frac{|B_r\cap\Omega|}{r^2} = 0 \, ,
	\end{equation}
then
	\begin{equation}\label{eq_ine0}
	\tilde g \ge g \quad \text{on } \, \Omega, \qquad H_{\tilde g} \le H_g \quad \text{on } \, \partial \Omega, 
	\end{equation}
where $H_{\tilde g}$ and $H_g$ are the mean curvatures of $\partial \Omega$ in the inward direction in the metrics $\tilde g$ and $g$, respectively. Furthermore, inequalities in \eqref{eq_ine0} are strict unless $\tilde g \equiv g$ on $\overline{\Omega}$.
	\end{theorem}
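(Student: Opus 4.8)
The plan is to turn the conformal comparison into a scalar differential inequality governed by the operator $\mathscr{L}:=-\Delta-\frac{S^\varphi}{m-1}$, and then to run a maximum principle at infinity calibrated to the growth assumptions \eqref{phS_vol_1}--\eqref{phS_vol_0}. First I would write $\tilde g=u^{4/(m-2)}g$ with $u\in C^\infty(\overline\Omega)$, $u>0$; then the first condition in \eqref{parOm_cond}\,ii) forces $u\equiv 1$ on $\partial\Omega$, and since $|\di\tilde\varphi|^2_{\tilde g}=u^{-4/(m-2)}|\di\varphi|^2_g$, the conformal law for the scalar curvature gives on $\Omega$
\[
-\frac{4(m-1)}{m-2}\Delta u+S^\varphi u=\tilde S^{\tilde\varphi}\,u^{\frac{m+2}{m-2}}.
\]
Writing $c_m=\frac{4(m-1)}{m-2}$, $p=\frac{m+2}{m-2}$ and using the first condition in \eqref{parOm_cond}\,i) together with $u>0$ we get $c_m\Delta u\le S^\varphi(u-u^{p})$ on $\Omega$. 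The elementary inequality $pt-t^{p}\le p-1$ for $t>0$ (equality only at $t=1$), i.e. $u-u^{p}\le(p-1)(1-u)$, combined with $S^\varphi\ge 0$ and $\frac{S^\varphi}{c_m}(p-1)=\frac{S^\varphi}{m-1}$, shows that $h:=1-u$ satisfies $\mathscr{L}h\le 0$ on $\Omega$, $h\equiv 0$ on $\partial\Omega$, and $h<1$ on $\Omega$ (because $u>0$); hence $h^+:=\max\{h,0\}$ is a bounded nonnegative weak subsolution of $\mathscr{L}$ on $\Omega$ vanishing on $\partial\Omega$. The goal becomes $h^+\equiv 0$, i.e. $\tilde g\ge g$.

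Next I would introduce the potential $v:=e^{-\mu f}$, positive on $M$ and $>1$ on $\Omega$ since $\mu>0$ and $f<0$ there. Tracing the first equation of \eqref{Ein-type} (equivalently $\Hess v=\mu v(\Ricc^\varphi-\lambda\metric)$) gives $\Delta v=\mu v(S^\varphi-m\lambda)$, and rewriting \eqref{lambda_eps} via $e^{\mu f}=v^{-1}$ yields $\mu v(m\lambda-S^\varphi)\ge\frac{S^\varphi}{m-1}(v-1)+\mu m\eps\,v$ on $\Omega$. Therefore $w:=v-1$ is positive on $\Omega$ and satisfies
\[
\mathscr{L}w\ \ge\ \mu m\eps\,v\ \ (\ge\mu m\eps\ge 0)\qquad\text{on }\Omega;
\]
when \eqref{lambda_eps} is an equality with $\eps=0$, $w$ is exactly a positive solution of $\mathscr{L}w=0$ on $\Omega$, which is precisely the role of the static potential in Theorem \ref{thm_qy}. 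Setting $\eta:=h^+/w\ge 0$ (so $\eta\equiv 0$ on $\partial\Omega$ and $w^2\eta^2=(h^+)^2<1$) and using the Doob-type identity $\mathscr{L}(w\eta)=\eta\,\mathscr{L}w-\frac1w\div(w^2\nabla\eta)$ together with $\mathscr{L}h^+\le 0$, one obtains weakly on $\Omega$
\[
\div(w^2\nabla\eta)\ \ge\ w\eta\,\mathscr{L}w\ \ge\ \mu m\eps\,vw\eta\ \ge\ 0.
\]

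Now comes the core step: testing this inequality against $\eta\chi^2$ with a cutoff $\chi$ that is compactly supported in the $B_r$-direction but $\equiv 1$ up to $\partial\Omega$ (the boundary contribution is $\int_{\partial\Omega}\chi^2\,h^+\partial_{\nu}h^+=0$ since $h^+\equiv 0$ there), and using $w^2\eta^2<1$, one gets a Caccioppoli estimate
\[
\tfrac12\int_\Omega w^2\chi^2|\nabla\eta|^2+\mu m\eps\int_\Omega (h^+)^2\chi^2\ \le\ 2\int_\Omega|\nabla\chi|^2.
\]
If $\eps=0$, take $\chi$ linear on $B_{2r}\setminus B_r$: the right-hand side is $\le C\,|B_{2r}\cap\Omega|/r^2$, which tends to $0$ along a sequence by \eqref{phS_vol_0}, so $\nabla\eta\equiv 0$, hence $\eta$ is constant and, vanishing on the part of $\partial\Omega$ where $w>0$, $\eta\equiv 0$. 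If $\eps>0$, the zeroth-order term lets one absorb the gradient after choosing $\chi$ decaying like $e^{-\alpha\rho}$: the surviving bound is $\lesssim\alpha^2\int_{\Omega\cap B_{2r}}(h^+)^2 e^{-2\alpha\rho}$, which by \eqref{phS_vol_1} can be driven to $0$ by first letting $r\to\infty$ along a good sequence and then $\alpha\downarrow 0$, forcing $h^+\equiv 0$. Either way $u\ge 1$ on $\Omega$, i.e. $\tilde g\ge g$.

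Finally, $u\ge 1=u|_{\partial\Omega}$ gives $\partial_{\nu_g}u\le 0$ on $\partial\Omega$, and the conformal transformation law of the mean curvature under $\tilde g=u^{4/(m-2)}g$, evaluated where $u=1$ and with the normalization \eqref{eq_conveH}, reads $H_{\tilde g}-H_g=\frac{2}{m-2}\,\partial_{\nu_g}u\le 0$, which proves \eqref{eq_ine0}. For the rigidity: if $\tilde g\not\equiv g$ on $\overline\Omega$ then $u\not\equiv 1$; since $u\ge 1$ we have $S^\varphi(u-u^{p})\le 0$, so $\Delta u\le 0$ on $\Omega$, and the strong maximum principle gives $u>1$ throughout $\Omega$ while Hopf's lemma gives $\partial_{\nu_g}u<0$ on $\partial\Omega$, hence the strict inequalities. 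I expect the main difficulty to lie in the maximum-principle-at-infinity step under the \emph{sharp} thresholds \eqref{phS_vol_1}--\eqref{phS_vol_0} (the borderline of parabolicity, respectively of subexponential volume growth in the presence of the spectral gap produced by $\eps>0$), and in the bookkeeping needed to keep the boundary terms under control when $w=e^{-\mu f}-1$ degenerates along $\partial\Omega\cap\{f=0\}$; by contrast, the first three steps are essentially algebraic once the potential $w$ and the inequality $pt-t^{p}\le p-1$ are in hand.
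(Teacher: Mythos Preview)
Your strategy is essentially the paper's: write $\tilde g=u^{4/(m-2)}g$, produce a subsolution $h=1-u$ of a Schr\"odinger-type operator, build the positive supersolution $w=e^{-\mu f}-1$ from the Einstein-type structure, form the quotient $\eta=h^+/w$, and run a Caccioppoli/Liouville argument. However, three steps in your sketch are genuine gaps.

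\textbf{(a) The quotient is not an admissible test function.} Nothing prevents $\partial\Omega$ from meeting $\{f=0\}$, where $w=0$. There $\eta=h^+/w$ is of type $0/0$, and your test function $\eta\chi^2$ need not lie in any space where the integration by parts is justified; the boundary term you write as ``$\int_{\partial\Omega}\chi^2 h^+\partial_\nu h^+=0$'' tacitly assumes $\eta$ is regular up to $\partial\Omega$. The paper handles this by working with $w_\delta=w+\delta$ and $\zeta_\delta=v/w_\delta$ (their $v$ is your $h$), so that $\zeta_\delta\equiv 0$ on $\partial\Omega$ cleanly, proving a Liouville lemma for the whole approximating family, and only then letting $\delta\to 0$. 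You flag the issue at the end but do not fix it; it is not bookkeeping, it is the reason the paper states and proves its Lemma for a \emph{sequence} $\psi_\delta$.

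\textbf{(b) The case $\eps=0$, $\eta\equiv c>0$ is not ruled out.} Your argument ``$\eta$ is constant and, vanishing on the part of $\partial\Omega$ where $w>0$, $\eta\equiv 0$'' fails precisely when $\Omega$ is a connected component of $\{f<0\}$, i.e.\ $\partial\Omega\subset\{w=0\}$. The paper treats this case separately: because it keeps the sharper coefficient $\Lambda(x)$ (with $\Lambda<\frac{S^\varphi}{m-1}$ strictly on $\{h>0,\,S^\varphi>0\}$) rather than your cruder bound $u-u^p\le (p-1)(1-u)$, it can force $S^\varphi\equiv 0$ on $\Omega$, reduce $h$ to a bounded nonnegative harmonic function vanishing on $\partial\Omega$, and finish with Yau/Karp under \eqref{phS_vol_0}. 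Your linearization throws away exactly the strictness needed here; you can still recover (from $\mathscr{L}h=c\,\mathscr{L}w=0$ one backs out equality in $pt-t^p\le p-1$, hence $S^\varphi\equiv 0$), but this extra argument must be supplied.

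\textbf{(c) The $\eps>0$ step does not close under the \emph{liminf} volume condition.} Your exponential cutoff idea would need $\int_\Omega e^{-2\alpha\rho}<\infty$ for some small $\alpha>0$, i.e.\ $|B_r\cap\Omega|\le e^{\beta r}$ for \emph{all} large $r$ with some $\beta<2\alpha$. Hypothesis \eqref{phS_vol_1} only gives this along a subsequence, which is not enough to make the absorption $\mu m\eps\int(h^+)^2\chi^2\ge 2\int(h^+)^2|\nabla\chi|^2$ work. The paper instead tests with $\chi^{2\alpha}$, uses H\"older to get the recursive inequality $I(R)\le (2\alpha^2/cR^2)^\alpha I(2R)$ for $I(R)=\int_{B_R}(h^+)^2$, and then optimizes $\alpha\sim R$ (a Karp-type iteration) to deduce that $\log I(r)/r$ has a positive liminf---contradicting \eqref{phS_vol_1} since $(h^+)^2<1$. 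This is a different mechanism from the one you sketch, and is what makes the \emph{sharp} liminf threshold work.

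Your rigidity endgame via $\Delta u\le 0$, the strong minimum principle and Hopf's lemma is fine (the paper argues equivalently via the Yamabe equation itself).
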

Note that, for $\mu = 1$ and
$$
f = -\log(1+w), \qquad	\lambda(x) = \frac{S^\varphi}{m-1}\left( 1 - \frac{e^f}{m} \right)
$$
we are exactly in the case of the \ref{phCPE} equation and \eqref{lambda_eps} is satisfied with $\eps=0$. Thus,  we have
\begin{corollary} \label{cor_phS_uniq}
	Let $(M,g)$ be a complete manifold of dimension $m\geq 3$ possessing a $\varphi$-CPE structure \eqref{phCPE} with $S^\varphi \ge 0$. Let
	$$
		\Omega \subset \{ x \in M : w(x) > 0 \}
	$$
be a connected open set with smooth boundary, For $\tilde g \in [g]$ assume the validity of \eqref{parOm_cond} and suppose that, for a fixed origin $o\in M$,
	$$
		\liminf_{r\to \infty} \frac{|B_r \cap \Omega|}{r^2} = 0 \, .
	$$
Then,
	\begin{equation}\label{eq_ine}
	\tilde g \ge g \quad \text{on } \, \Omega, \qquad H_{\tilde g} \le H_g \quad \text{on } \, \partial \Omega, 
	\end{equation}
and the inequalities are strict unless $\tilde g \equiv g$ on $\overline{\Omega}$.
\end{corollary}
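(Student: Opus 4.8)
The plan is to adapt the conformal‑class scalar‑curvature rigidity technique of Hang--Wang \cite{hw06}, Qing--Yuan \cite{qy16} and \cite{bmv} to the $\varphi$‑setting, to extract the relevant static weight from $f$, and to replace the compactness of $\Omega$ by the volume hypotheses \eqref{phS_vol_1}--\eqref{phS_vol_0} through a vanishing theorem at infinity.

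First I would reformulate everything conformally. Write $\tilde g=u^{4/(m-2)}g$ with $u\in C^\infty(M)$, $u>0$; then $ii)$ in \eqref{parOm_cond} reads $u\equiv 1$ on $\partial\Omega$. Since $\tilde\varphi=\varphi$ as maps, $|\di\tilde\varphi|^{2}_{\tilde g}=u^{-4/(m-2)}|\di\varphi|^{2}_{g}$, so $S^{\varphi}$ obeys exactly the same conformal law as the scalar curvature, namely $\tilde S^{\tilde\varphi}=u^{-\frac{m+2}{m-2}}\bigl(-\tfrac{4(m-1)}{m-2}\Delta u+S^{\varphi}u\bigr)$, and $i)$ in \eqref{parOm_cond} becomes the differential inequality $\Delta u\le\kappa\,(u-u^{p})$ on $\Omega$, with $p:=\tfrac{m+2}{m-2}>1$ and $\kappa:=\tfrac{m-2}{4(m-1)}\,S^{\varphi}\ge 0$. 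Next I would produce the static weight: set $h:=e^{-\mu f}$, so $h>1$ on $\Omega\subset\{f<0\}$. Tracing the first equation of \eqref{Ein-type} gives $\Delta f=m\lambda-S^{\varphi}+\mu|\nabla f|^{2}$, whence $\Delta h=\mu h\,(S^{\varphi}-m\lambda)$; inserting \eqref{lambda_eps} together with $e^{\mu f}=h^{-1}$ and simplifying yields $\Delta h\le\tfrac{S^{\varphi}}{m-1}(1-h)-\mu m\,\eps\,h$ on $\Omega$, i.e., with $\omega:=h-1>0$ on $\Omega$,
\[
\Delta\omega+\tfrac{S^{\varphi}}{m-1}\,\omega\le-\mu m\,\eps\,h\qquad\text{on }\Omega .
\]
Thus $\omega$ is a positive supersolution of $\mathcal L:=-\Delta-\tfrac{S^{\varphi}}{m-1}$ on $\Omega$, strictly so if $\eps>0$ (for \eqref{phCPE} this is an equality, the trace of the first equation). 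The key point is that $\mathcal L$ is also the operator controlling the inequality for $u$ near $u\equiv 1$: setting $\psi:=(1-u)^{+}\ge 0$ (so $\psi\le 1$ and $\psi\equiv 0$ on $\partial\Omega$), on $\{u<1\}$ one has $u-u^{p}=\beta(1-u)$ with $0<\beta\le p-1$, and since $\kappa(p-1)=\tfrac{S^{\varphi}}{m-1}$, Kato's inequality gives $\Delta\psi+\tfrac{S^{\varphi}}{m-1}\psi\ge 0$ weakly on all of $\Omega$; that is, $\psi$ is a non‑negative subsolution of $\mathcal L$ vanishing on $\partial\Omega$.

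I would then compare the two through the ground‑state substitution $\phi:=\psi/\omega\ge 0$. The two inequalities yield, weakly on $\Omega$,
\[
\mathrm{div}\bigl(\omega^{2}\nabla\phi\bigr)\ \ge\ \mu m\,\eps\,h\,\psi\ \ge\ \mu m\,\eps\,\omega^{2}\phi\ \ge\ 0 ,
\]
so $\phi$ is $\omega^{2}$‑weighted subharmonic, vanishes on $\partial\Omega$, and $\int_{B_{r}\cap\Omega}\phi^{2}\omega^{2}=\int_{B_{r}\cap\Omega}\psi^{2}\le|B_{r}\cap\Omega|$ because $\psi\le 1$. If $\eps=0$, testing against $\xi^{2}\phi$ with a standard cutoff and using \eqref{phS_vol_0} forces $\nabla\phi\equiv 0$, hence $\phi\equiv 0$; if $\eps>0$, the extra term $\mu m\eps\,\omega^{2}\phi$ permits an Agmon‑type argument with exponentially decaying cutoffs, and \eqref{phS_vol_1} again forces $\phi\equiv 0$. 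Either way $\psi\equiv 0$, i.e.\ $u\ge 1$, i.e.\ $\tilde g\ge g$ on $\Omega$. For the boundary estimate, the conformal law for the normalized mean curvature gives $H_{\tilde g}=H_{g}+\tfrac{2}{m-2}\,\partial_{\nu_{g}}u$ on $\partial\Omega$ (where $u\equiv 1$), while $u\ge 1$ in $\Omega$ with $u\equiv 1$ on $\partial\Omega$ forces $\partial_{\nu_{g}}u\le 0$, so $H_{\tilde g}\le H_{g}$. Finally $u\ge 1$ makes $\Delta u\le\kappa(u-u^{p})\le 0$, so by the strong maximum principle either $u\equiv 1$ on $\Omega$ — that is, $\tilde g\equiv g$ on $\overline\Omega$ — or $u>1$ throughout $\Omega$, in which case the Hopf lemma gives $\partial_{\nu_{g}}u<0$ on $\partial\Omega$ and both inequalities in \eqref{eq_ine0} are strict.

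The conceptual heart of the proof is the matching in the second step: the function $e^{-\mu f}-1$ extracted from \eqref{Ein-type} is a positive (super)solution of precisely the Schr\"odinger operator $-\Delta-\tfrac{S^{\varphi}}{m-1}$ that governs the conformal change of $S^{\varphi}$, which is where \eqref{lambda_eps} is used and where the coefficient $\tfrac{1}{m-1}$ characteristic of the CPE equation enters through $\kappa(p-1)=\tfrac{S^{\varphi}}{m-1}$. The main technical obstacle is the vanishing step on the possibly non‑compact $\Omega$: one has to run the weighted cutoff estimates with the sharp volume hypotheses (the quadratic one \eqref{phS_vol_0} in the borderline case $\eps=0$, the exponential one \eqref{phS_vol_1} when the spectral gap $\mu m\eps>0$ is available), and to handle the mild degeneration of the weight $\omega=e^{-\mu f}-1$ along $\{f\to 0\}$ — harmless when $\overline\Omega\subset\{f<0\}$, and otherwise dealt with by exhausting $\Omega$.
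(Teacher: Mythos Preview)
Your approach coincides with the paper's in all its main lines: the same conformal reformulation $\tilde g=u^{4/(m-2)}g$ and the inequality $\Delta u\le\kappa(u-u^{p})$, the same static weight $\omega=e^{-\mu f}-1$ (in the CPE case this is exactly the function $w$ from \eqref{phCPE}), the same ground-state substitution $\phi=\psi/\omega$, the same weighted Caccioppoli/Agmon estimate under \eqref{phS_vol_1}--\eqref{phS_vol_0}, and the same concluding rigidity via the strong maximum principle and Hopf lemma. Your derivations of the differential inequalities are correct; in particular your observation $\kappa(p-1)=\tfrac{S^{\varphi}}{m-1}$ is exactly the paper's \eqref{LamSphi}.

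There is, however, a real gap in the $\eps=0$ step (which is the only one relevant to the corollary). From $\diver(\omega^{2}\nabla\phi)\ge 0$ and \eqref{phS_vol_0} the cutoff argument yields $\nabla\phi\equiv 0$, i.e.\ $\phi$ is \emph{constant} on $\Omega$ --- not $\phi\equiv 0$. Your implicit passage ``$\phi$ vanishes on $\partial\Omega$, hence $\phi\equiv 0$'' requires $\omega$ to be bounded away from zero on $\partial\Omega$, which fails exactly when $\partial\Omega$ meets $\{f=0\}$; the hypothesis $\Omega\subset\{w>0\}$ does not exclude this. Your proposed fix ``exhaust $\Omega$'' cannot work as stated, because the boundary condition $u\equiv 1$ lives on $\partial\Omega$ and does not transfer to the exhausting domains $\{f<-1/j\}$. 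The paper addresses this in two parts. First, it runs the Caccioppoli estimate not for $\phi$ but for the regularized $\zeta_\delta=v/(\omega+\delta)$, which does vanish on $\partial\Omega$ for every $\delta>0$; the technical Lemma~\ref{L2_liou} is tailored precisely to pass to the limit $\delta\to 0$. Second, it treats the residual case ``$\zeta=v/\omega$ is a positive constant'' separately: one then has $\partial\Omega\subset\{f=0\}$, and feeding the \emph{strict} inequality $\Lambda<\tfrac{S^{\varphi}}{m-1}$ on $\{v>0,\,S^{\varphi}>0\}$ (your $\beta<p-1$, which you record but do not use) back into the equalities forces $S^{\varphi}\equiv 0$, after which $v$ is a bounded non-negative harmonic function vanishing on $\partial\Omega$ and Karp's $L^{2}$ Liouville theorem finishes. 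This constant-ratio case is the substantive point you are missing.
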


A result corresponding to Corollary \ref{cor_phS_uniq} can be formulated for \eqref{phVSE}. We leave the statement and details to the interested reader. 

\begin{remark}
\emph{Strictly speaking Corollary \ref{cor_phS_uniq} is a consequence of Theorem \ref{thm_phS_uniq} in case $w>-1$ due to the change of variable \eqref{fw_chvar}. However, the same argument of the proof of Theorem \ref{thm_phS_uniq} applies directly \emph{mutatis mutandis} to Corollary \ref{cor_phS_uniq} without requiring that $w> -1$.
}
\end{remark}

The idea of the proof of Theorem \ref{thm_phS_uniq} is different from the one used by Barbosa, Mirandola and Vitorio in \cite{bmv}, and for relatively compact sets $\Omega$ it gives an alternative approach to their Theorem \ref{thm_qy} which is closer to the arguments in \cite{qy16}. We stress that  the open set $\Omega_f$ is not required to be relatively compact, and indeed, avoiding relative compactness of $\Omega_f$ is a bit of a subtle technical point in the proof. Our aim is achieved by using a special case of an analytic result of independent interest, see Lemma \ref{L2_liou} below.\par

The second theorem we are going to prove extends Baltazar's recent result mentioned above, \cite{bal20}, to the system \eqref{phCPE}. At the same time, we streamline part of the proof, highlighting the role played by a Kazdan-Warner type identity in Lemma \ref{lem_divT} below. We introduce the $\varphi$-Weyl tensor
\begin{equation}
	W^\varphi = \Riem - \frac{1}{m-2} A^\varphi \owedge \metric \, , \quad m\geq 3
\end{equation}
where $\Riem$ is the Riemann tensor, $\owedge$ is the ``parrot'' (Kulkarni-Nomizu) product and $A^\varphi$ is the $\varphi$-Schouten tensor
\begin{equation}\label{def_Schou}
	A^\varphi = \Ricc^\varphi - \frac{S^\varphi}{2(m-1)} \metric \, .
\end{equation}
We can express $W^\varphi$ in terms of the usual Weyl tensor $W$ via the equation
\begin{equation}
	W^\varphi = W + \frac{\alpha}{m-2} F \owedge \metric \, , \quad m\geq 3
\end{equation}
with
\begin{equation}
	F = \varphi^\ast\metric_N - \frac{|\di\varphi|^2}{2(m-1)} \metric \, .
\end{equation}
%(We reserve the special name $F$ for the above quantity %because of its importance, see \cite{cmr21} for details.) 
%
Note that the $\varphi$-Weyl tensor has the same symmetries of $\Riem$, in particular, it satisfies the first Bianchi identity. However, in general it is not totally trace free. 
%indeed
%\begin{equation}
%	W^\varphi_{kikj} = \alpha\varphi^a_i \varphi^a_j = \alpha(\varphi^\ast\metric_N)_{ij} \, .
%\end{equation}

We are ready to state our second main result. Notice that, for constant $\varphi$, \eqref{Sph_pinch} below becomes condition \eqref{cond_balta}. 

\begin{theorem} \label{thm_Sph}
	Let $(M,\metric)$ be a compact manifold of dimension $m\geq 3$ with a $\varphi$-CPE structure as in \eqref{phCPE} for some non-constant function $w$ and some $\alpha \in \R$. Assume that $\tau(\varphi) = 0$ and that 
	\begin{equation} \label{Sph_pinch}
		\frac{S^\varphi}{2(m-1)} - \frac{\alpha}{m-2}|\di\varphi|^2 \geq \sqrt{\frac{m-2}{2(m-1)}}|W^\varphi| + \sqrt{\frac{m}{m-1}}|T^\varphi|
	\end{equation}
	on $M$. Then $(M,\metric)$ is isometric to the standard sphere $\Sph^m(\kappa) \subseteq \RR^{m+1}$ of constant sectional curvature
	\begin{equation} \label{kappa}
		\kappa = \frac{S^\varphi}{m(m-1)} \, .
	\end{equation}
Moreover, if $\alpha \neq 0$ then $\varphi$ is constant.
	\end{theorem}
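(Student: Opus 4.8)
The plan is to follow the strategy of Baltazar \cite{bal20}, but in the $\varphi$-setting, using the $\varphi$-CPE structure \eqref{phCPE} with $\tau(\varphi)=0$ (which, by the second equation of \eqref{phCPE}, forces $\di\varphi(\nabla w)=0$, hence $\nabla w$ is tangent to the level sets of $\varphi$). First I would recall from Proposition \ref{prop_DS} that $S^\varphi$ is a positive constant (positivity as in the discussion after \eqref{eq_intide}, since $w$ is non-constant and $M$ is compact), so $\kappa$ in \eqref{kappa} is a well-defined positive number. The goal is to show that the traceless $\varphi$-Ricci tensor $T^\varphi$ vanishes identically; once this is done, the first equation of \eqref{phCPE} reduces to $\Hess(w) = -\frac{S^\varphi}{m(m-1)} w\,\metric$, and Obata's theorem (Theorem A of \cite{obata62}, used exactly as in the Introduction) gives that $(M,\metric)$ is isometric to $\Sph^m(\kappa)$. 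For the last assertion, on the round sphere $\Ricc^\varphi = \Ricc - \alpha\varphi^\ast\metric_N$ must be a multiple of $\metric$ (since $T^\varphi=0$) while $\Ricc = (m-1)\kappa\,\metric$; comparing traces and using $S^\varphi = S - \alpha|\di\varphi|^2$ constant forces $|\di\varphi|^2$ constant, and then $\varphi^\ast\metric_N = c\,\metric$ for a constant $c$; a harmonic map from a compact manifold with $\varphi^\ast\metric_N$ parallel and proportional to the metric, combined with the second structure equation, is easily seen (via the Bochner/Eells–Sampson formula and $\int_M |\di\varphi|^2$ considerations) to be constant when $\alpha\neq 0$, forcing $c=0$.

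The core of the argument is therefore the integral estimate leading to $T^\varphi\equiv 0$ under the pinching \eqref{Sph_pinch}. The key tool is the Kazdan–Warner type identity of Lemma \ref{lem_divT}, which generalizes Hwang's identity $\div(T(\nabla w,\cdot)^\sharp) = (1+w)|T|^2$; I expect it to read, in the $\varphi$-CPE case with $\tau(\varphi)=0$, something like
\[
\div\!\big(T^\varphi(\nabla w,\cdot)^\sharp\big) = (1+w)|T^\varphi|^2 + (\text{a curvature/Weyl cross term}),
\]
and more precisely, tracing the divergence of the first structure equation together with the contracted second Bianchi identity for $\Ricc^\varphi$, one gets a pointwise formula for $\div T^\varphi$ in terms of $\nabla w$, $S^\varphi$, and $\di\varphi$. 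Integrating this over the compact $M$ kills the divergence term, and after an integration by parts (using $w$ as multiplier, as in \cite{bal20}) one arrives at an identity of the schematic form
\[
\int_M (1+w)|T^\varphi|^2 = \int_M \Big\langle w\,T^\varphi, \ \tfrac{1}{m-2}\,\mathring{W}^\varphi(\nabla w) + \cdots \Big\rangle + \int_M (\text{terms in } \di\varphi),
\]
where $W^\varphi$ enters through the decomposition $\Riem = W^\varphi + \frac{1}{m-2}A^\varphi\owedge\metric$. The delicate point is that $w$ is not sign-definite, so the naive bound fails; following Baltazar, I would instead contract the first structure equation with $T^\varphi$ directly to produce a term $\frac{S^\varphi}{m-1}|T^\varphi|^2 - w|T^\varphi|^2\cdot(\text{stuff})$ and combine it with the integrated divergence identity so that the bad $w$-weighted terms cancel, leaving
\[
\int_M \Big( \tfrac{S^\varphi}{m-1} - \tfrac{\text{const}}{m-2}|\di\varphi|^2\Big)|T^\varphi|^2 \ \le\ \int_M \Big( c_1|W^\varphi| + c_2|T^\varphi|\Big)|T^\varphi|^2 + (\text{lower order}),
\]
with the constants matching those in \eqref{Sph_pinch}; one then estimates $|\langle W^\varphi, T^\varphi\owedge T^\varphi\rangle| \le \sqrt{\frac{m-2}{2(m-1)}}|W^\varphi||T^\varphi|^2$ (this precise constant is the algebraic content that produces $\sqrt{(m-2)/(2(m-1))}$) and $|T^\varphi|^3$-type cubic terms are bounded with the constant $\sqrt{m/(m-1)}$.

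The hard part, and the place where I expect the main obstacle, is twofold: first, establishing the correct form of the $\varphi$-Kazdan–Warner identity in Lemma \ref{lem_divT} — in particular tracking how the extra tensor $\alpha\varphi^\ast\metric_N$ and the term $\di\varphi(\nabla w)=0$ interact with the second Bianchi identity, since $\Ricc^\varphi$ does not satisfy the usual contracted Bianchi identity on the nose (there is a correction involving $\langle\tau(\varphi),\di\varphi\rangle$, which however vanishes here because $\tau(\varphi)=0$) — and second, obtaining the sharp algebraic inequality relating $\langle W^\varphi, T^\varphi\owedge T^\varphi\rangle$ (respectively the cubic term in $T^\varphi$) to $|W^\varphi||T^\varphi|^2$ (respectively $|T^\varphi|^3$) with exactly the constants appearing in \eqref{Sph_pinch}; these are the linear-algebra lemmas (diagonalizing $T^\varphi$ at a point) that must be carried out carefully so that \eqref{Sph_pinch} is precisely the borderline case forcing the right-hand side to be dominated by the left, yielding $\int_M(\cdots)|T^\varphi|^2 \le 0$ with a non-negative coefficient, hence $T^\varphi\equiv 0$. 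Equality analysis then shows the pinching is saturated only in the round case, consistent with the conclusion. Once $T^\varphi\equiv 0$ is in hand, the rest is the Obata argument and the harmonic-map rigidity described above.
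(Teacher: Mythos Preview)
Your outline has the right beginning and end (positivity of $S^\varphi$ via Proposition~\ref{prop_DS}, then Obata once $T^\varphi\equiv 0$), but the middle --- where the actual work lies --- is both vague and off-track in a way that would not close. The paper does \emph{not} reach $T^\varphi\equiv 0$ by bounding an integral of the form $\int_M(\cdots)|T^\varphi|^2$ directly from the divergence identity in Lemma~\ref{lem_divT}. Instead, the argument hinges on an integral \emph{equality} (formula~\eqref{int_form2}) with weight $(1+w)^2$, obtained by multiplying the Bochner formula for $\tfrac12\Delta|T^\varphi|^2$ (Lemma~\ref{lem_Boch}) against $(1+w)^2$ and comparing it with a second, lengthy divergence computation built from the $\varphi$-Cotton tensor $C^\varphi$, Ricci commutation relations, and identity~\eqref{eq12_01}. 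The Kazdan--Warner identity~\eqref{div_T_int} enters only at the very end, to convert $\int_M(1+w)|T^\varphi|^2=0$ into a simplification of the $S^\varphi$-terms. Your sketch never produces the key term $\tfrac12\int_M|T^\varphi|^2|\nabla w|^2$, nor the $\varphi$-Cotton term $\tfrac12\int_M(1+w)^2|C^\varphi|^2$, both of which are essential: under the pinching~\eqref{Sph_pinch} together with Okumura's lemma~\eqref{okumura} and inequality~\eqref{WT_ineq}, every term on the right of~\eqref{int_form2} is nonnegative, so each must vanish separately. In particular one gets $|T^\varphi|^2|\nabla w|^2\equiv 0$ \emph{pointwise}, which is much stronger than an integral inequality and is what actually drives the conclusion.

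A second gap is the passage from $|T^\varphi||\nabla w|\equiv 0$ to $T^\varphi\equiv 0$. This is not automatic: one needs Lemma~\ref{lem_claim}, which shows that the set $\{T^\varphi\neq 0\}$ (being contained in the critical set of $w$) would force $w$ to be locally constant there, and then the $\varphi$-CPE equation makes $\Ricc^\varphi$ a multiple of the metric on that set, a contradiction. Along the way one must check that $\{w=-1\}$ has measure zero, using that critical points of $w$ at level $-1$ are nondegenerate. None of this appears in your plan. Finally, your argument for the constancy of $\varphi$ is more elaborate than needed: once $M$ is isometric to $\Sph^m(\kappa)$ one has $S=m(m-1)\kappa=S^\varphi$, hence $\alpha|\di\varphi|^2=S-S^\varphi=0$, and $\varphi$ is constant whenever $\alpha\neq 0$ --- no Eells--Sampson machinery is required.
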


%\tcb{THIS PART SHOULD BE REMOVABLE, RIGHT?\\
%As a corollary we would expect to obtain, for $\varphi$ a constant map, Baltazar's result mentioned in viii). In fact it appears we have a supplementary assumption, that is,
%\begin{equation}
%	\div W = 0 \, .
%\end{equation}
%We shall discuss on this after the proof of Theorem \ref{thm_Sph} in section \ref{sec_thm_Sph}.}

\section{Variational derivation of \eqref{phCPE}}\label{sec_variational}

We let $M$ be compact and without boundary, and denote with $\mathscr{M}$ be the set of smooth Riemannian metrics on $M$, endowed with the compact open $C^\infty$ topology. We also fix $(N,\metric_N)$ and denote with $\mathscr{F}$ the set of smooth maps $\varphi : M \to N$, again with the compact open $C^\infty$ topology. We consider the functional $\mathcal{S} : \mathscr{M} \times \mathscr{F} \to \R$ given by
	\[
	\mathcal{S}(g,\varphi) = \int_M S^\varphi_g \di x_g,
	\]
where $\di x_g$, $S^\varphi_g$ are the Riemannian volume and the $\varphi$-scalar curvature of $g$. In this section, we explain how \eqref{phCPE} can be seen as the Euler-Lagrange equation of $\mathcal{S}$ restricted to the subset of metrics and maps $(g,\varphi)$ with unit volume and constant $\varphi$-scalar curvature $S_g^\varphi$. Our treatment parallels the one in \cite[pp. 127-128]{besse87}. To avoid technicalities, we keep the discussion at an informal level and do not describe the function spaces used to justify the properties of the operators which we are going to consider. Given $(h,v) \in T_{(g,\varphi)}(\mathscr{M} \times \mathscr{F}) = T_g\mathscr{M} \times T_\varphi \mathscr{F}$ (notice that $T_\varphi \mathscr{F}$ can be identified with sections of $\varphi^*TN$ via the exponential map), it holds 
	\[
	\Big( \di_{(g,\varphi)}\mathcal{S}\Big)[(h,v)] = \int_M \left[ \dot{S}^\varphi_g(h,v) \di x_g + S^\varphi_g \dot{\di x_g}(h,v) + (S^\varphi_g)'(h,v) \di x_g	\right] \, , 
	\]
%	\[
%	\Big( \di_{(g,\varphi)}\mathcal{S}\Big)[(h,v)] = \int_M \left[ \left. \frac{\partial S^\varphi_g}{\partial g} \right|_{(g,\varphi)}[(h,v)] \di x_g + S^\varphi_g \left. \frac{\partial \di x_g}{\partial g} \right|_{(g,\varphi)}[(h,v)] + \left. \frac{\partial S^\varphi_g}{\partial \varphi} \right|_{(g,\varphi)}[(h,v)] \di x_g	\right]
%	\]
where the dot and prime symbols denote, respectively, differentiation with respect to $g$ and $\varphi$ at the point $(g,\varphi)$. Direct computations (cf. Propositions 34 and 35 in \cite{a21}) give
	\[
\begin{array}{lcl}
\dot{S}^\varphi_g(h,v) & = & \disp - \Delta_g \big( \tr_g h\big) + \diver_g\big(\diver_g h\big) - \langle h, \Ric^\varphi_g \rangle_g \\[0.4cm]
\dot{\di x_g}(h,v) & = & \frac{1}{2} \tr_g h \, \di x_g \\[0.4cm]
(S^\varphi)'(h,v) & = & - \alpha (|\di \varphi|^2_g)'(h,v) = -2\alpha \diver_g( \langle \di \varphi, v \rangle_N \big) + 2\alpha \langle \tau(\varphi), v \rangle_N, 
\end{array}	
	\]
whence, integrating by parts, 	
	\begin{equation}\label{eq_ELeq}
	\Big( \di_{(g,\varphi)}\mathcal{S}\Big)[(h,v)] = - \int_M \langle \Ric_g^\varphi - \frac{S_g^\varphi}{2}g, h \rangle_g \di x_g + 2\alpha \int_M \langle \tau_g(\varphi),v \rangle_N \di x_g.
	\end{equation}
Let $\mathscr{M}_1 \subset \mathscr{M}$ be the subset of metrics $g$ with $\vol_g(M) = 1$, and let 
	\[
	\mathscr{G} = \big\{ (g, \varphi) \in \mathscr{M}_1 \times \mathscr{F} \ : \ \text{$S^\varphi_g$ is constant} \big\}. 
	\]
If $(h,v)$ generates a  variation $(g_t,\varphi_t)$ of $(g, \varphi) \in \mathscr{G}$ for which, up to first order, the scalar curvature $S^{\varphi_t}_{g_t}$ is constant on $M$ for each $t$, 
	\[
	\begin{array}{l}
	\left. \frac{\di}{\di t}\right|_{t=0} S^{\varphi_t}_{g_t} = \Big(\di_{(g,\varphi)}S_g^\varphi \Big)[(h,v)] \\[0.4cm]
	\qquad =  - \Delta_g \big( \tr_g h\big) + \diver_g\big(\diver_g h\big) - \langle h, \Ric^\varphi_g \rangle_g -2\alpha \diver_g( \langle \di \varphi, v \rangle_N \big) + 2\alpha \langle \tau(\varphi), v \rangle_N
	\end{array}
	\]
must be constant on $M$, equivalently, 
	\[
	\beta_{(g,\varphi)}[(h,v)] \doteq  \Delta_g \left( \Big( \di_{(g,\varphi)}S_g^\varphi \Big)[(h,v)] \right) = 0.
	\]
Therefore, at least formally, $T_{(g,\varphi)} \mathscr{G}$ can be seen as the set of pairs
	\begin{equation}\label{T_1G}
	(h,v) \in \ker \beta_{(g,\varphi)} \cap \big( T_g \mathscr{M}_1 \times T_\varphi \mathscr{F}\big),
	\end{equation}
where 
	\begin{equation}\label{eq_tgM1}
	T_g \mathscr{M}_1 = \left\{ h'\in S^2(M) : \int_M \tr_g h' \di x_g = 0 \right\}. 
	\end{equation}
We hereafter assume $(g,\varphi) \in \mathscr{G}$.  Computing the adjoint map 
	\[
	\beta_{(g,\varphi)}^* \ : \ C^\infty(M) \to T_g \mathscr{M}_1 \times T_\varphi \mathscr{F},
	\]
we get
	\[
	\begin{array}{l}
	\disp \int_M \langle \beta_{(g,\varphi)}^*(\eta), (h,v)\rangle_g = \disp \int_M \eta \beta_{(g,\varphi)}[(h,v)]\di x_g \\[0.4cm]
	\qquad  \disp = \int_M \Delta_g \eta \Big( \dot{S}^\varphi_g(h,v) + (S^\varphi_g)'(h,v) \Big)\di x_g \\[0.4cm]
	\qquad = \disp \int_M \langle h, - (\Delta_g \Delta_g \eta) g + \Hess_g(\Delta_g \eta) - (\Delta_g \eta) \Ric^\varphi_g \rangle_g \di x_g\\[0.4cm]
	\qquad \quad \disp -2\alpha \int_M \diver_g\big( \langle \di \varphi, v \rangle_N \big) \Delta_g \eta \di x_g + 2 \alpha \int_M \langle \tau_g(\varphi), v\rangle_N \Delta_g \eta \di x_g \\[0.4cm]
	\qquad = \disp \int_M \langle h, - (\Delta_g \Delta_g \eta) g + \Hess_g(\Delta_g \eta) - (\Delta_g \eta) \Ric^\varphi_g \rangle_g \di x_g\\[0.4cm]
	\qquad \quad \disp + 2 \alpha \int_M \langle \di \varphi(\nabla \Delta_g \eta) + (\Delta_g \eta) \tau_g(\varphi), v \rangle_N \di x_g,
	\end{array}
	\]  
and therefore, 
	\[
	\beta_{(g,\varphi)}^*(\eta) = \Big( - (\Delta_g w) g + \Hess_g(w) - w \Ric^\varphi_g, 2\alpha \big( \di \varphi(\nabla w) + w \tau_g(\varphi)\big) \Big) 
	\]
where $w = \Delta_g \eta$. Notice that $\beta_{(g,\varphi)}^*$ is valued in $T_g \mathscr{M}_1 \times T_\varphi \mathscr{F}$, because $(g,\varphi) \in \mathscr{G}$ implies the constancy of $S^\varphi_g$. In view of \eqref{eq_tgM1} and again because of the constancy of $S^\varphi_g$, the Euler-Lagrange equation  \eqref{eq_ELeq} can be written as follows for variations $(h,v) \in T_g \mathscr{M}_1 \times T_\varphi \mathscr{F}$:
	\begin{equation}\label{eq_bellaEL}
	\Big( \di_{(g,\varphi)}\mathcal{S}\Big)[(h,v)] = - \int_M \langle T^\varphi, h \rangle_g \di x_g + 2\alpha \int_M \langle \tau_g(\varphi),v \rangle_N \di x_g.
	\end{equation}
Taking into account that the tangent space $T_g \mathscr{M}_1 \times T_\varphi \mathscr{F}$ decomposes as  
	\[
	T_g \mathscr{M}_1 \times T_\varphi \mathscr{F} = \Big( \ker \beta_{(g,\varphi)} \cap \big( T_g \mathscr{M}_1 \times T_\varphi \mathscr{F} \big) \Big) \oplus^\perp \mathrm{Im} \beta_{(g,\varphi)}^* \, ,
	\]
then from \eqref{eq_bellaEL} we deduce that $(g,\varphi)$ is critical for $\mathcal{S}$ with respect to variations satisfying \eqref{T_1G} if and only if $(T^\varphi, -2\alpha \tau_g(\varphi)) \in \mathrm{Im} \beta_{(g,\varphi)}^*$, that is, if and only if there exists $w \in C^\infty(M)$ that can be written as $w = \Delta_g \eta$ (equivalently, $w$ has mean value zero on $(M,g)$) and satisfies \eqref{phCPE}.

\section{Proof of Theorem \ref{thm_phS_uniq}}

We shall make use of the following Liouville type theorem. Hereafter, 
\[
v_+ \doteq \max\{v,0\} 
\]
is the positive part of a function $v$. 

	\begin{lemma} \label{L2_liou}
		Let $(M,\metric)$ be a complete manifold and $\Omega\subseteq M$ be a connected open set with non-empty boundary. Let $0< w\in C(\Omega)$. For each $\delta>0$, define $w_\delta = w+ \delta$ and let $\{\psi_\delta\}_\delta \subset \lip_\loc(\Omega)$ satisfy
		\begin{equation}\label{ip_technical}
		\psi_\delta \to \psi, \qquad (\psi_\delta)_+ w_\delta \uparrow \psi_+w \qquad \text{locally uniformly in $\Omega$ as $\delta \to 0$,}
		\end{equation}
for some $\psi \in C(\Omega)$. Assume further that $\psi_\delta$ is a weak solution of
		\begin{equation} \label{L2_psi_cond}
			\begin{cases}
			w_\delta^{-2} \diver\big( w_\delta^2 \nabla \psi_\delta \big) \geq (c-f_\delta) \psi_\delta  & \quad \text{on } \, \Omega_\delta \doteq \{ x \in \Omega : \psi_\delta(x) > 0 \} \neq \emptyset \\[0.4cm]
				\displaystyle \limsup_{x\to\partial\Omega}\psi_\delta(x) \leq 0 \, ,
			\end{cases}
		\end{equation}
for some constant $c \ge 0$ and some functions $f_\delta : \Omega \to \R$ satisfying
	\begin{equation}\label{ass_fdelta}
	\begin{array}{l}
	(i) \quad \forall \, K \subset \overline\Omega \ \text{ compact,} \quad \|f_\delta\|_{L^\infty(K)} \le C_K \ \text{ for some constant } \, C_K > 0; \\[0.2cm]
	(ii) \quad f_\delta \to 0 \ \ \text{ pointwise a.e. in } \, \Omega. 
	\end{array}
	\end{equation}
If $\{ x : \psi(x) > 0\} \neq \emptyset$, then the following holds:
	\begin{align}
			\label{L2_vol_1}
			\text{if } \, c > 0 \, , & \text{ then} \qquad \liminf_{r\to \infty} \frac{1}{r} \log \left( \int_{B_r \cap \Omega} w^2 \psi_+^2 \right) > 0 \, ; \\
			\label{L2_vol_0}
			\text{if } \, c = 0 \, , & \text{ then} \qquad \liminf_{r\to \infty} \frac{1}{r^2} \int_{B_r\cap\Omega} w^2 \psi_+^2 > 0  \quad \text{ unless $\psi$ is constant.}
		\end{align}
where $B_r$ is the geodesic ball of radius $r$ in $M$ centered at a fixed origin. 		
	\end{lemma}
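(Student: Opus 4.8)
The plan is to run a Caccioppoli-type / log-cutoff argument on the weighted operator $L_\delta u = w_\delta^{-2}\diver(w_\delta^2 \nabla u)$, and then take $\delta\to 0$. Fix $\delta>0$ and work on $\Omega_\delta$. Choose a Lipschitz cutoff $\phi$ supported in $B_R\cap\Omega$ and test the differential inequality in \eqref{L2_psi_cond} against $\phi^2 \psi_\delta$ with respect to the weighted measure $w_\delta^2\,\dif x$. The boundary condition $\limsup_{x\to\partial\Omega}\psi_\delta\le 0$ lets me replace $\psi_\delta$ by $(\psi_\delta)_+$ and discard the boundary terms: since $\phi^2(\psi_\delta)_+$ vanishes near $\partial\Omega$ and on $\partial\Omega_\delta\cap\Omega$, integration by parts is legitimate and produces no boundary contribution. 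This yields, after the usual Cauchy--Schwarz absorption of the cross term $\int w_\delta^2 \phi (\psi_\delta)_+ \langle\nabla\phi,\nabla\psi_\delta\rangle$,
\[
\int_{B_R\cap\Omega} w_\delta^2 \phi^2 |\nabla (\psi_\delta)_+|^2 + \int_{B_R\cap\Omega}(c-f_\delta)\,w_\delta^2\phi^2 (\psi_\delta)_+^2 \le C\int_{B_R\cap\Omega} w_\delta^2 |\nabla\phi|^2 (\psi_\delta)_+^2 .
\]

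Next I would isolate the volume-growth information. Assume, for contradiction, that in the case $c>0$ one has $\liminf_{r\to\infty} r^{-1}\log\big(\int_{B_r\cap\Omega} w^2\psi_+^2\big)=0$ (and in the case $c=0$, that $\liminf_{r\to\infty} r^{-2}\int_{B_r\cap\Omega} w^2\psi_+^2 = 0$ while $\psi$ is nonconstant). Using the monotone convergence $(\psi_\delta)_+ w_\delta \uparrow \psi_+ w$ and Fatou, the quantity $V(r)\doteq \int_{B_r\cap\Omega} w^2\psi_+^2$ controls $\int_{B_r\cap\Omega} w_\delta^2 (\psi_\delta)_+^2$ from above for small $\delta$, up to a fixed multiplicative constant on each fixed ball. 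Pick the cutoff $\phi$ to be the standard one that is $1$ on $B_r$, $0$ outside $B_{2r}$, with $|\nabla\phi|\le c/r$ (exponential-type cutoffs $\phi = e^{-t\,\dist(\cdot,o)}$ in the $c>0$ case). Then the right-hand side is $\lesssim r^{-2}\int_{B_{2r}\cap\Omega} w_\delta^2(\psi_\delta)_+^2$; along the subsequence $r_j$ realizing the $\liminf$, and after sending $\delta\to 0$ first and $r_j\to\infty$ after, the right-hand side tends to $0$. Hence $\int_\Omega w^2 \psi_+^2|\nabla\psi_+|^2 = 0$ and $\int_\Omega (c-f_\delta) w_\delta^2 (\psi_\delta)_+^2 \le 0$ in the limit. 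When $c>0$: since $f_\delta\to 0$ a.e. and is locally bounded, dominated convergence gives $c\int_\Omega w^2\psi_+^2\le 0$, forcing $\psi_+\equiv 0$, contradicting $\{\psi>0\}\ne\emptyset$. When $c=0$: the gradient term forces $\psi_+$ to be locally constant on $\{\psi>0\}$, hence (connectedness of $\Omega$ and $\limsup_{\partial\Omega}\psi\le 0$) either $\psi_+\equiv 0$ or $\psi$ is a positive constant on all of $\Omega$ with $\limsup_{\partial\Omega}\psi\le 0$; the first contradicts $\{\psi>0\}\ne\emptyset$ and the second is excluded by hypothesis.

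The delicate point is the order of limits and the fact that $\Omega$ need not be relatively compact, so the cutoff $\phi$ must be spatial (a function of $\dist_g(\cdot,o)$ on $M$, truncated to $\Omega$) rather than adapted to $\partial\Omega$; this is precisely why $(M,g)$ complete is used, via properness of $\dist_g(\cdot,o)$, and why the hypotheses are phrased through $|B_r\cap\Omega|$. One must be careful that $\phi^2\psi_\delta$ is a legitimate test function for the \emph{weak} inequality \eqref{L2_psi_cond}: it lies in $\lip_c(\Omega_\delta)$ with the weighted energy finite, by the a priori bound from step one applied on a slightly larger ball (a bootstrap — first use an arbitrary cutoff to get $\psi_\delta\in W^{1,2}_\loc(w_\delta^2\dif x)$, then rerun with the good cutoff). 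The interchange $\delta\to 0$ requires the local $L^\infty$ bound in \eqref{ass_fdelta}(i) (so that $(c-f_\delta)$ is bounded below locally, uniformly in $\delta$, and the negative part of $(c-f_\delta)(\psi_\delta)_+^2$ is dominated) together with \eqref{ass_fdelta}(ii) and $(\psi_\delta)_+ w_\delta\uparrow \psi_+ w$; I expect the bookkeeping of these two nested limits, and verifying that no mass escapes through $\partial\Omega$ in the non-relatively-compact case, to be the main obstacle. Everything else is the classical Yau/Sturm $L^2$-Liouville machinery applied to the $w^2$-weighted Laplacian.
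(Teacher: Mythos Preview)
Your overall architecture matches the paper's for the case $c=0$, and the weighted Caccioppoli with standard linear cutoff is exactly what is used there. Two points, however, need repair.

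First, a small but genuine omission: the condition $\limsup_{x\to\partial\Omega}\psi_\delta\le 0$ does \emph{not} guarantee that $(\psi_\delta)_+$ vanishes near $\partial\Omega$; it only says that for each $\eps>0$ one has $\psi_\delta<\eps$ near $\partial\Omega$. Thus $\phi^2(\psi_\delta)_+$ is not, in general, an admissible test function. The paper fixes this with an extra truncation $\psi_{\delta,\eps}=(\psi_\delta-\eps)_+$, whose support is compactly contained in $\Omega_\delta$, and then sends $\eps\to 0$ by monotone convergence before handling $\delta\to 0$.

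Second, and more seriously, your argument for $c>0$ does not close as written. With the linear cutoff the right-hand side is $\lesssim r^{-2}V(2r)$, but the contradiction hypothesis $\liminf_{r\to\infty} r^{-1}\log V(r)=0$ does \emph{not} force $r_j^{-2}V(2r_j)\to 0$ along any subsequence (e.g.\ $V(r)=r^3$ satisfies the hypothesis yet $r^{-2}V(2r)\to\infty$). Your parenthetical about exponential cutoffs $e^{-t\,\dist(\cdot,o)}$ points toward a valid alternative, but you have not carried it out, and it requires its own truncation and absorption steps. The paper takes a different route: it tests with $\eta^{2\alpha}\psi_{\delta,\eps}$ for a free exponent $\alpha\ge 1$, applies H\"older with exponents $(\alpha/(\alpha-1),\alpha)$ to the right-hand side, and after the limits in $\eps,\delta$ obtains the doubling inequality
\[
\left(\frac{cR^2}{2\alpha^2}\right)^{\alpha} I(R)\ \le\ I(2R), \qquad I(R)=\int_{B_R\cap\Omega} w^2\psi_+^2 .
\]
Choosing $\alpha\sim R\sqrt{c}$ (so that $cR^2/2\alpha^2$ is a fixed constant larger than $1$) yields $\log I(r)\ge \log I(R_0)+c' r$ directly, i.e.\ a positive lower bound for $r^{-1}\log I(r)$ without arguing by contradiction. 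Either flesh out your exponential-cutoff variant or adopt this power-of-cutoff trick; the linear cutoff alone is insufficient when $c>0$.
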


\begin{remark}
\emph{The limsup in the second condition of \eqref{L2_psi_cond} is defined as 
\[
\limsup_{x \ra \partial \Omega}\psi_\delta(x) \doteq \inf\Big\{ \sup_{\Omega \backslash \overline{V}} \psi_\delta \ : \ V \ \text{open whose closure in $M$ satisfies } \, \overline{V} \subset \Omega \Big\}.
\]
}
\end{remark}

\begin{remark}
\emph{Notice that the above lemma also applies to relatively compact (connected) domains $\Omega$. If $\Omega$ is relatively compact, the limit relations \eqref{L2_vol_1}, \eqref{L2_vol_0} never hold unless $w \psi_+ \not \in L^2(\Omega)$ (or $\psi$ is constant, if $c=0$).
} 
\end{remark}

\begin{remark}
\emph{The reason why Lemma \ref{L2_liou} is stated for a sequence of approximating solutions $\{\psi_\delta\}$ rather than for a single solution $\psi$ is to allow for limits $\psi$ that may \emph{not} satisfy the boundary condition in \eqref{L2_psi_cond}. This will be crucial in application to the proof of Theorem \ref{thm_phS_uniq}. 
}
\end{remark}

\begin{proof}
We hereafter assume that
	\begin{equation}\label{eq_ass}
	\int_{B_r \cap \Omega} w^2 \psi_+^2 < \infty \qquad \text{for each } \, r > 0,
	\end{equation}
otherwise the desired conclusion is obvious. Note that the second in \eqref{ip_technical} and $\Omega_\delta \neq \emptyset$ imply that $\psi_+ \not \equiv 0$, so we can fix $R_0$ large enough such that $\psi_+ \not \equiv 0$ on $\Omega \cap B_{R_0}$. For every $\delta>0$, choose $\eps_0 = \eps_0(\delta) >0$ small enough so that
	$$
		\Omega_{\delta,\eps} = \{ x \in \Omega : \psi_\delta(x) > \eps \}
	$$
	is non-empty for $\eps \le \eps_0$, and define
	$$
		\psi_{\delta,\eps} = (\psi_\delta -\eps)_+ = \max\{ \psi_\delta - \eps, 0 \} \, .
	$$
By the second condition in \eqref{L2_psi_cond}, $\mathrm{supp} \psi_{\delta,\eps} = \overline{\Omega_{\delta,\eps}}$ does not meet $\partial\Omega$. 
%The pointwise convergence $\psi_\delta \to \psi$ guarantees that we can choose $\delta_0>0$ such that, up to further reducing $\eps_0$,%up to further reducing $\delta, \eps$ we can also assume that 
%	\[
%	B_{R_0} \cap \Omega_{\delta, \eps} \neq \emptyset \qquad \forall \, \delta \le \delta_0, \ \eps \le \eps_0(\delta).
%	\]
Given $R>R_0$, let $\eta$ be a Lipschitz cut-off function such that
	$$
		\eta \equiv 1 \, \text{ on } \, B_R \, , \qquad \eta \equiv 0 \, \text{ on } \, M \setminus B_{2R} \, , \qquad |\nabla\eta| \leq \frac{1}{R} \, \text{ on } \, M.
	$$
   Let $\alpha\geq 1$ to be suitably chosen later and consider $z_{\delta,\eps} := \eta^{2\alpha}\psi_{\delta,\eps}$. Note that $z_{\delta,\eps}$ is non-negative and  Lipschitz with compact support in $\overline{\Omega_{\delta,\eps}} \subseteq \Omega_\delta$, so it is an admissible test function to be inserted in the weak definition of \eqref{L2_psi_cond} to obtain
	\[
		\begin{array}{lcl}
			\disp \int_\Omega (c-f_\delta) z_{\delta,\eps} \psi_\delta w_\delta^2 & \le & - \disp \int_\Omega \langle \nabla\psi_\delta,\nabla z_{\delta,\eps} \rangle w_\delta^2 \\[0.4cm]
			& = & - \disp \int_\Omega |\nabla\psi_\delta|^2 \mathbf{1}_{\Omega_{\delta,\eps}} \eta^{2\alpha} w_\delta^2 - 2\alpha \int_\Omega \langle\nabla\eta,\nabla\psi_\delta\rangle \eta^{2\alpha-1} \psi_{\delta,\eps} w_\delta^2 \\[0.4cm]
			& \leq & - \dfrac{1}{2} \disp \int_\Omega |\nabla\psi_\delta|^2 \mathbf{1}_{\Omega_{\delta,\eps}} \eta^{2\alpha} w_\delta^2 + 2\alpha^2 \int_\Omega \eta^{2\alpha-2} \psi_{\delta,\eps}^2 w_\delta^2 |\nabla\eta|^2
		\end{array}
	\]
	where in passing from the second to the third line we have used Cauchy-Schwarz and Young's inequalities. Rearranging terms and using H\"older's inequality we arrive at
%	\[
%		|2\alpha\eta^{2\alpha-1} \psi_{\delta,\eps} w_\delta^2 \langle\nabla\psi_\delta,\nabla\eta \rangle| \leq 2\alpha^2 w_\delta^2 \eta^{2\alpha-2} \psi_{\delta,\eps}^2 |\nabla\eta|^2 + \frac{1}{2} \eta^{2\alpha} w_\delta^2 |\nabla\psi_\delta|^2 \mathbf{1}_{\Omega_{\delta,\eps}}
%	\]
%	so that we arrive at
	\begin{equation}\label{eq_notbad_1}
		\begin{array}{lcl}
			\disp \int_\Omega (c-f_\delta) z_{\delta,\eps} \psi_\delta w_\delta^2 & \leq & \disp \int_\Omega (c-f_\delta) z_{\delta,\eps} \psi_\delta w_\delta^2 + \dfrac{1}{2} \disp \int_\Omega |\nabla\psi_\delta|^2 \mathbf{1}_{\Omega_{\delta,\eps}} \eta^{2\alpha} w_\delta^2 \\[0.4cm]
			& \leq & \disp 2\alpha^2 \int_\Omega \eta^{2\alpha-2} \psi_{\delta,\eps}^2 w_\delta^2 |\nabla\eta|^2 \\[0.4cm]
			& \leq & \disp 2\alpha^2 \left( \int_\Omega \psi_{\delta,\eps}^2 w_\delta^2 \eta^{2\alpha} \right)^{\frac{\alpha-1}{\alpha}} \left( \int_\Omega \psi_{\delta,\eps}^2 w_\delta^2 |\nabla\eta|^{2\alpha} \right)^{\frac{1}{\alpha}} .
		\end{array}
	\end{equation}
We study limits as $\eps \to 0$ and then as $\delta \to 0$. Using \eqref{ass_fdelta} and the second in \eqref{ip_technical}, the following pointwise convergences hold (in the arrow subscript we point out the parameter going to zero): 
	\[
	\begin{array}{l}
	c z_{\delta,\eps} \psi_\delta w_\delta^2 \ \ \uparrow_\eps \ \ c\eta^{2\alpha}(\psi_{\delta})_+^2 w_\delta^2 \ \ \uparrow_\delta \ \ c\eta^{2\alpha}\psi_+^2 w^2 \\[0.3cm]
	\psi_{\delta,\eps}^2 w_\delta^2 \ \ \uparrow_\eps \  \ (\psi_{\delta})_+^2 w_\delta^2 \ \ \uparrow_\delta \ \ \psi_+^2 w^2, \\[0.3cm]
	|f_\delta| z_{\delta,\eps} \psi_\delta w_\delta^2 \to_{\eps} |f_\delta|\eta^{2\alpha}(\psi_\delta)_+^2w_\delta^2 \to_\delta 0, \qquad |f_\delta| z_{\delta,\eps} \psi_\delta w_\delta^2 \le C_{\overline{B_{2R} \cap \Omega}} \cdot \psi_+^2 w^2 
	%	(c-f_\delta) z_{\delta,\eps} \psi_\delta w_\delta^2 \ %\ \to_\eps \ \ (c-f_\delta)\eta^{2\alpha}%(\psi_{\delta})_+^2 w_\delta^2 \ \ \to_\delta \ \ c%\eta^{2\alpha}\psi_+^2 w^2 \\[0.3cm]
	\end{array}
	\]
Therefore, applying the monotone convergence theorem to the right-hand side and to the addendum with $c$ in the left-hand side, while using \eqref{ass_fdelta}, \eqref{eq_ass} and Lebesgue theorem to the addendum with $f_\delta$, we deduce
	\begin{equation}\label{thegood}
	\begin{array}{lcl}
		\disp c \int_\Omega \eta^{2\alpha} \psi_+^2 w^2 & \le & \disp 
		\disp c \int_\Omega \eta^{2\alpha} \psi_+^2 w^2 + \frac{1}{2} \liminf_{\delta \to 0} \int_{\Omega_{\delta}} \eta^{2\alpha} w_\delta^2 |\nabla\psi_\delta|^2 \\[0.4cm]
		& \leq & \disp 2\alpha^2 \left( \int_\Omega \psi_+^2 w^2 \eta^{2\alpha} \right)^{\frac{\alpha-1}{\alpha}} \left( \int_\Omega \psi_+^2 w^2 |\nabla\eta|^{2\alpha} \right)^{\frac{1}{\alpha}} \, .
	\end{array}
	\end{equation}

\noindent \textbf{Case} $c>0$.\\ 	
Using the properties of $\eta$, 	
	$$
		\int_\Omega \psi_+^2 w^2 \eta^{2\alpha} \geq \int_{B_R} \psi_+^2 w^2 \ge \int_{B_{R_0}} \psi_+^2 w^2 > 0
	$$
and
	\[
		\left( \frac{c}{2\alpha^2} \right)^\alpha \int_{B_R} \psi_+^2 w^2 \leq \int_\Omega \psi_+^2 w^2 |\nabla\eta|^{2\alpha} \leq \frac{1}{R^{2\alpha}} \int_{B_{2R}} \psi_+^2 w^2.
	\]
Defining 
	\[
	I(R) = \int_{B_R} \psi_+^2 w^2,
	\]
we deduce the recursive relation 
	$$
		\left( \frac{2\alpha^2}{c R^2} \right)^\alpha I(2R) \geq I(R) \geq I(R_0) > 0 \, .
	$$
We pass to logarithms with $r=2R$. Then
	\begin{equation} \label{liou_exp}
		\log I(r) + \alpha \log\left( \frac{8\alpha^2}{cr^2} \right) \geq \log I(R_0) \, .
	\end{equation}
	For any $r>2R_0$, this inequality holds for any $\alpha \ge 1$. For every
	$$
		r > R_1 := \max\left\{ 2R_0, \frac{8}{\sqrt{c}} \right\}
	$$
	we can choose
	$$
		\alpha = \frac{r \sqrt{c}}{4} > 2 \, ,
	$$
	so that
	$$
		\alpha\log\left( \frac{8\alpha^2}{cr^2} \right) = -\frac{\sqrt{c}\log 2}{4} r \, .
	$$
	With this choice of $\alpha$, dividing both sides of \eqref{liou_exp} by $r$ we get
	$$
		\frac{1}{r} \log \int_{B_r} \psi_+^2 w^2 \geq \frac{1}{r} \log I(R_0) + \frac{\sqrt{c}\log 2}{4} \qquad \forall \, r > R_1 \, .
	$$
	Since the support of $\psi_+$ is contained in $\Omega$, we obtain \eqref{L2_vol_1}.\\[0.2cm]
\noindent 	\textbf{Case} $c=0$.\\
We consider \eqref{thegood} with $\alpha = 1$, which by our definition of $\eta$ simplifies to 	
	\begin{equation}\label{lasimple}
	\frac{1}{2} \liminf_{\delta \to 0} \int_{B_R} \mathbf{1}_{\Omega_\delta} w_\delta^2 |\nabla\psi_\delta|^2 \le \disp \frac{2}{R^2} \int_{B_{2R}} (\psi_\delta)_+^2 w_\delta^2.
	\end{equation}
Using the second in \eqref{ip_technical}, our assumption \eqref{eq_ass} and $w>0$ on $\Omega$, we deduce from \eqref{lasimple} that $\{(\psi_\delta)_+\}$ is uniformly bounded in $W^{1,2}(\Omega')$ for each open set with compact closure $\Omega' \Subset \Omega \cap B_R$. Standard convergence results imply that $\psi_+ \in W^{1,2}_\loc(\Omega \cap B_R)$ and that, for each $\Omega' \Subset \Omega \cap B_R$ 
	\[
	\int_{\Omega'} w^2 |\nabla\psi_+|^2 \le \liminf_{\delta \to 0} \int_{\Omega'} \mathbf{1}_{\Omega_\delta} w_\delta^2 |\nabla\psi_\delta|^2 .
	\]
Letting $\Omega'$ exhaust $\Omega \cap B_R$, from \eqref{lasimple} we get
	\begin{equation}\label{lasimple_2}
	\frac{1}{2} \int_{B_R} w^2 |\nabla\psi_+|^2 \le \disp \frac{2}{R^2} \int_{B_{2R}} \psi_+^2 w^2.
	\end{equation}
Assume by contradiction that \eqref{L2_vol_0} does not hold, that is, the liminf is zero. Letting $R \to \infty$ along a sequence $\{R_j\}$ such that $\{2R_j\}$ realizes the liminf, we deduce from \eqref{lasimple_2} that $|\nabla \psi_+| \equiv 0$ on $\Omega$. As we are assuming that $\{\psi>0\} \neq \emptyset$, an open-closed argument implies that $\psi$ is constant on $\Omega$ (recall that $\Omega$ is connected).
\end{proof}

\begin{proof}[Proof of Theorem \ref{thm_phS_uniq}]\label{sec_thm3}

	We divide the reasoning into three steps. \\[0.2cm]
\textbf{Step 1.} We let $u\in C^\infty(\overline\Omega)$, $u>0$ be such that $\tilde g = u^{\frac{4}{m-2}} g$. Then, by \cite[Proposition 12]{a21}, under the above conformal change of metric, we have the validity of
	\begin{equation}\label{eq_yamabe}
		c_m \Delta u - S^\varphi u + \tilde S^{\tilde\varphi} u^{\frac{m+2}{m-2}} = 0 \qquad \text{on } \, \Omega,
	\end{equation}
	with $c_m = 4\frac{m-1}{m-2}$. We define
	\begin{equation}
		v = 1-u \ \ \in C^\infty(\overline\Omega)
	\end{equation}
	and using the above, together with \eqref{parOm_cond} i) we compute
	\begin{equation} \label{yam_v}
		c_m \Delta v = -S^\varphi u  + \tilde S^{\tilde\varphi}u^{\frac{m+2}{m-2}} \ge - S^\varphi u \left( 1 - u^{\frac{4}{m-2}} \right) = - c_m\Lambda(x)v
	\end{equation}
on $\Omega$, where we set
	\begin{equation}
		\Lambda(x) = \begin{cases}
			\dfrac{S^\varphi u(x)[1- u^{\frac{4}{m-2}}(x)]}{c_m [1-u(x)]} & \text{if } \, u(x) \neq 1 \\[0.3cm]
			\dfrac{S^\varphi}{m-1} & \text{if } \, u(x) = 1.
		\end{cases}
	\end{equation}
Notice that $\Lambda\in C(\overline{\Omega})$, and that, since $S^\varphi \ge 0$,
	\begin{equation} \label{LamSphi}
		\Lambda(x) \le \frac{S^\varphi}{m-1} \qquad \text{on $\{v>0\} \subset \Omega$, with strict inequality on $\{v>0, S^\varphi > 0\}$}.  
	\end{equation}
To see this, it is enough to observe that $y : (0,+\infty)\setminus\{1\} \to \R$ defined by
\[
	y(t) = \frac{t(t^{\frac{4}{m-2}}-1)}{c_m(t-1)}
\]
satisfies $y(t)\to\frac{1}{m-1}$ as $t\to 1$ and $y(t)<\frac{1}{m-1}$ for $t\in(0,1)$.
%	$$
%		y(t) = \frac{t(t^{\frac{4}{m-2}} - 1)}{c_m(t-1)} < \frac{1}{m-1} \qquad \text{on } \, (0,1) \, .
%	$$
On the other hand, the validity of \eqref{parOm_cond} ii) gives $v\equiv 0$ on $\partial\Omega$. Summarizing,
	\begin{equation} \label{v_prob}
		\begin{cases}
			\Delta v + \Lambda(x) v \ge 0 & \text{on } \, \Omega \\
			v \equiv 0 & \text{on } \, \partial\Omega. 
		\end{cases}
	\end{equation}
Our goal is to prove that $v \le 0$ on $\Omega$. Once this is shown, we can conclude as in \cite{bmv}: briefly,  $v \le 0$ implies $u \ge 1$ and thus $\tilde g \ge g$. Also, from $u=1$ on $\partial \Omega$ we get $\partial_\nu u \le 0$, where $\nu$ is the outward pointing unit normal to $\partial \Omega \hookrightarrow (\overline{\Omega},g)$. Recalling the identity
	$$
		u^{\frac{m}{m-2}} H_{\tilde g} = u H_g + \frac{2 \partial_\nu u}{m-2} \qquad \text{on } \, \partial \Omega,
	$$
we deduce that $H_{\tilde g} \le H_g$ on $\partial \Omega$. If either $\partial_\nu u = 0$ somewhere in $\partial \Omega$, or if $u=1$ somewhere in $\Omega$, applying respectively Hopf Lemma or the strong maximum principle to \eqref{eq_yamabe} we conclude $u \equiv 1$ by the connectedness of $\Omega$. Therefore, $\tilde g > g$ and $H_{\tilde g} < H_g$ unless $\tilde g = g$. \\[0.2cm]	
\textbf{Step 2.} To prove that $v \le 0$, set
	$$
		w = e^{-\mu f} - 1 \qquad \text{on } \, \Omega_f \doteq \big\{ x : f(x)<0\big\}
	$$
	and observe that $w>0$ on $\Omega_f$, $w\equiv 0$ on $\partial\Omega_f$. Since $\Omega \subset \Omega_f$, for each $\delta \in (0,1)$ we can define $w_\delta \doteq w+\delta$ and 
	\begin{equation}
		\zeta_\delta = \frac{v}{w_\delta} \qquad \text{on } \, \Omega \, .
	\end{equation}
	By the Einstein-type structure \eqref{Ein-type} we deduce
	\begin{equation}
		\Hess(w) = \mu(1+w)(\Ricc^\varphi - \lambda(x)\metric)
	\end{equation}
	so that, tracing, we obtain
	\begin{equation} \label{Delta_w}
		\Delta w_\delta = \Delta w = \mu(S^\varphi - m\lambda(x))(1+w) \, .
	\end{equation}
	Using \eqref{v_prob} and \eqref{Delta_w} we infer the following chain of inequalities on $\Omega$:
	\[
	\begin{array}{lcl}
	\disp w_\delta^{-2} \diver\big( w_\delta^2 \nabla \zeta_\delta \big) & = & \disp \frac{\Delta v}{w_\delta} - \frac{v}{w_\delta^2} \Delta w_\delta \\[0.4cm]
	& \ge & \disp \left[\frac{\mu(1+w)}{w_\delta}(m\lambda(x) - S^\varphi) - \Lambda(x) \right] \zeta_\delta \\[0.4cm]
	& = & \disp \frac{\mu(1+w)}{w_\delta}\left[m\lambda(x) - S^\varphi - \Lambda(x)\frac{1 - e^{\mu f}}{\mu}\right] \zeta_\delta - \frac{\mu \delta}{w_\delta} \Lambda(x) \zeta_\delta.
	\end{array}
	\]
Assume by contradiction that the set 
	$$
		U \doteq \{ x \in \Omega : v(x)>0\} = \{ x \in \Omega : \zeta_\delta(x) > 0 \} 
	$$
is non-empty. There, inequality \eqref{LamSphi} holds, and in view of \eqref{lambda_eps} we obtain
	\begin{equation}\label{eq_alge}
	\begin{array}{lcl}
	\disp m\lambda(x) - S^\varphi - \Lambda(x)\frac{1 - e^{\mu f}}{\mu} & \ge & \disp m\lambda(x) - S^\varphi - \frac{S^\varphi}{m-1}\frac{1 - e^{\mu f}}{\mu} \\[0.4cm]
	& \ge & \disp m\eps \qquad \text{on } \, U.
	\end{array}
	\end{equation}
Therefore, using $\mu(1+w)/w_\delta \ge \mu$ since $\delta < 1$, we conclude
	\[
	w_\delta^{-2} \diver\big( w_\delta^2 \nabla \zeta_\delta \big) \ge \left(\mu m\eps - \frac{\mu \delta}{w_\delta} \Lambda(x)\right) \zeta_\delta \qquad \text{on } \, U \, .
	\]
\vspace{0.2cm}
\noindent \textbf{Step 3. } We shall apply Lemma \ref{L2_liou} on $\Omega$, with the choices 
	\[
	\Omega_\delta = U, \qquad \psi_\delta = \zeta_\delta, \qquad c = \mu m \eps, \qquad f_\delta = \frac{\mu \delta}{w_\delta} \Lambda.
	\]
Note that $|f_\delta| \le \mu |\Lambda|$, $f_\delta \to 0$ on $\Omega$,  
	\[
	\zeta_\delta = 0 \quad \text{on } \, \partial \Omega, \qquad \zeta_\delta w_\delta = v \ \ \forall \, \delta
	\]
and that $\zeta_\delta \to \zeta \doteq v/w$ locally uniformly. The assumptions in the lemma are therefore satisfied, and we have either the validity of \eqref{L2_vol_1}, or the validity of \eqref{L2_vol_0} unless $\zeta$ is constant on $\Omega$. However, since $w^2\zeta^2_+ = v_+^2 < 1$ this contradicts, respectively, assumption \eqref{phS_vol_1} or \eqref{phS_vol_0}. It remains to rule out the possibility that $\zeta$ is a  (positive) constant in $\Omega$, namely, that $v$ is a positive multiple of $w$. In this case, the boundary condition on $v$ and $w$ implies that $\partial \Omega \subset \partial \Omega_f$, whence $\Omega$ is a  connected component of $\Omega_f$. Repeating \eqref{eq_alge} with $\delta = 0$, from \eqref{LamSphi} and \eqref{lambda_eps} we get on $\Omega$ the following inequalities:
	\[
	\begin{array}{lcl}
	0 = \disp w^{-2} \diver\big( w^2 \nabla \zeta \big) & \ge & \disp \left[\frac{\mu(1+w)}{w}(m\lambda(x) - S^\varphi) - \Lambda(x) \right] \zeta \\[0.4cm]
	& = & \disp \frac{\mu(1+w)}{w}\left[m\lambda(x) - S^\varphi - \Lambda(x)\frac{1 - e^{\mu f}}{\mu}\right] \zeta \\[0.4cm]
	& \ge & \disp \frac{\mu(1+w)}{w}\left[m\lambda(x) - S^\varphi - \frac{S^\varphi}{m-1}\frac{1 - e^{\mu f}}{\mu}\right] \zeta \ge 0.
	\end{array}
	\]
Whence, all are equalities and 
	\begin{equation}\label{eq_rigi}
	\Lambda(x) = \frac{S^\varphi}{m-1} = \frac{\mu}{1-e^{\mu f}}(m\lambda(x) - S^\varphi) \qquad \text{on } \, \Omega.
	\end{equation}
Comparing \eqref{v_prob} and \eqref{Delta_w}, we get that \eqref{v_prob} is satisfied with equality sign, which because of \eqref{yam_v} implies $\tilde S^{\tilde \varphi} = S^\varphi$ on $\Omega$. Also, the first identity in \eqref{eq_rigi}, inequality \eqref{LamSphi} and $u = 1-v < 1$ imply $S^\varphi \equiv 0$ on $\Omega$. %: I HAVE ASSUMED BUT NOT CHECKED THAT $y(t)<1$FOR $t \in (0,1)$.
However, in this case $\Lambda(x) = 0$ on $\Omega$ and $v$ would be a positive, bounded harmonic function which vanishes on the boundary of $\Omega$; hence, its extension $\bar v$ with zero on $M\backslash \Omega$ would be weakly subharmonic, non-negative and, from $v \le 1$, it would satisfy
	\[
	\liminf_{r \to \infty} \frac{1}{r^2} \int_{B_r} {\bar v}^2 \le \liminf_{r \to \infty} \frac{|\Omega \cap B_r|}{r^2} = 0.
	\]	
By Yau's theorem (in the improved version given by \cite[Thm. A]{karp}) we conclude $\bar v \equiv 0$, contradiction.  
\end{proof}

\section{Proof of Theorem \ref{thm_Sph}} \label{sec_thm_Sph}

Hereafter, we let $\{e_i\}$, $1 \le i \le m$ be a local orthonormal frame on $M$, with dual coframe $\{\theta^j\}$. Let also $\{E_a\}$, $1 \le a \le n$ be a local orthonormal frame on $N$. Given a smooth map $\varphi : M \to N$, we write in components the differential $\di \varphi$ and the Hessian $\nabla \di \varphi$ (see \cite{el88}) as
	\[
	\di \varphi = \varphi^a_i \theta^i \otimes E_a, \qquad \nabla \di \varphi = \varphi^a_{ij} \theta^j \otimes \theta^i \otimes E_a.
	\]
Hence, the energy density $|\di \varphi|^2$ and the tension field $\tau(\varphi) = \mathrm{Tr}(\nabla \di \varphi)$ are given by
	\[
	|\di \varphi|^2 = \varphi^a_i\varphi^a_i, \qquad \tau(\varphi) = \varphi^a_{ii}E_a
	\] 
where Einstein convention on repeated indices is tacitly assumed. Also, hereafter, the presence of commas in the subscript of the components of a tensor means taking covariant derivatives.\par

We first prove that a $\varphi$-CPE structure has necessarily constant scalar curvature.

\begin{proposition} \label{prop_DS}
	Let $(M,\metric)$ be a connected manifold of dimension $m\geq 3$ with a $\varphi$-CPE structure
	\begin{equation} \label{phCPE1}
		\begin{cases}
			\Hess(w) - w\left( \Ricc^\varphi - \frac{S^\varphi}{m-1} \metric \right)= T^\varphi \\
			(1+w)\tau(\varphi) = -\di\varphi(\nabla w)
		\end{cases}
	\end{equation}
	for some $\varphi : (M,\metric) \to (N,\metric_N)$ and $\alpha\in\RR$. Then, $S^\varphi$ is constant on $M$. Furthermore, if $M$ is compact and $w$ is non-constant then $S^\varphi > 0$.
\end{proposition}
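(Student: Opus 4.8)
The plan is to differentiate the first equation of \eqref{phCPE1}, substitute the trace of \eqref{phCPE1} and the second equation of the system, and read off that $\di S^\varphi$ is annihilated by an explicit affine function of $w$; constancy then follows by inspecting the zero set of that function.

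First I would rewrite the first equation of \eqref{phCPE1} in the equivalent form
\begin{equation}\label{eq_pp_hess}
\Hess(w) = (1+w)\Ricc^\varphi - \Big( \tfrac{w}{m-1} + \tfrac1m \Big)\, S^\varphi\, \metric,
\end{equation}
and trace it to get $\Delta w = -\tfrac{S^\varphi}{m-1}\, w$ (exactly as in \eqref{trCPE1}; no a priori constancy of $S^\varphi$ is needed here). Then I would take the divergence of \eqref{eq_pp_hess}. On the left, the Bochner/commutation identity gives $\div\Hess(w) = \di(\Delta w) + \Ricc(\nabla w,\cdot)$. On the right, the contracted second Bianchi identity together with the elementary identity $\div(\varphi^\ast\metric_N) = \langle\tau(\varphi),\di\varphi(\cdot)\rangle_N + \tfrac12\,\di|\di\varphi|^2$ give $\div\Ricc^\varphi = \tfrac12\di S^\varphi - \alpha\langle\tau(\varphi),\di\varphi(\cdot)\rangle_N$. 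Substituting $\di(\Delta w) = -\tfrac{1}{m-1}\big(w\,\di S^\varphi + S^\varphi\,\di w\big)$, writing $\Ricc(\nabla w,\cdot) = \Ricc^\varphi(\nabla w,\cdot) + \alpha\langle\di\varphi(\nabla w),\di\varphi(\cdot)\rangle_N$, and — this is the one non-automatic step — using the second equation of \eqref{phCPE1} to replace $\di\varphi(\nabla w)$ by $-(1+w)\tau(\varphi)$, the two occurrences of $\alpha(1+w)\langle\tau(\varphi),\di\varphi(\cdot)\rangle_N$ cancel, and so do the $\Ricc^\varphi(\nabla w,\cdot)$ terms and the $\tfrac{S^\varphi}{m-1}\di w$ terms. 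What remains is the identity
\begin{equation}\label{eq_pp_id}
(mw+m-2)\,\di S^\varphi = 0 \qquad \text{on } M.
\end{equation}

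To conclude that $S^\varphi$ is constant, set $V = \{x\in M : \di S^\varphi(x)\neq 0\}$, which is open, and suppose $V\neq\emptyset$. By \eqref{eq_pp_id}, $w\equiv\tfrac{2-m}{m}$ on $V$, hence $\nabla w\equiv 0$ and $\Hess(w)\equiv 0$ there; plugging this into \eqref{eq_pp_hess} with $1+w=\tfrac2m$ gives $\Ricc^\varphi = \tfrac{S^\varphi}{2(m-1)}\metric$ on $V$, whose trace forces $\tfrac{m-2}{2(m-1)}\,S^\varphi = 0$, i.e. $S^\varphi\equiv 0$ on $V$ since $m\geq 3$. This contradicts the definition of $V$, so $V=\emptyset$ and $S^\varphi$ is constant because $M$ is connected. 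For the last claim, with $S^\varphi$ now constant, multiply $\Delta w + \tfrac{S^\varphi}{m-1}w = 0$ by $w$ and integrate over the compact $M$: an integration by parts gives $\tfrac{S^\varphi}{m-1}\int_M w^2 = \int_M|\nabla w|^2$, and both integrals are strictly positive when $w$ is non-constant, so $S^\varphi > 0$.

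I expect the only genuinely delicate point to be the treatment of the set $\{mw+m-2=0\}$ appearing in \eqref{eq_pp_id} — the $\varphi$-CPE analogue of the set $\{w=-1\}$ in the classical CPE; the argument above bypasses any unique-continuation or measure-theoretic input by working on the open set $V$ and letting the system itself force $S^\varphi\equiv 0$ there. The rest is bookkeeping, the only subtlety being the cancellation of the $\tau(\varphi)$-terms, which is precisely where the second equation of \eqref{phCPE1} enters.
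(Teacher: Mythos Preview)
Your argument is correct and follows essentially the same route as the paper: both take the divergence of the first structure equation, invoke the $\varphi$-Schur identity $\div\Ricc^\varphi = \tfrac12\,\di S^\varphi - \alpha\langle\tau(\varphi),\di\varphi\rangle_N$, and use the second equation of \eqref{phCPE1} to cancel the $\tau(\varphi)$-terms, arriving at the same key identity $(mw+m-2)\,\di S^\varphi=0$. Your endgame on $V=\{\di S^\varphi\neq0\}$ is a clean variant of the paper's open-dense argument; note that on $V$ you can get $S^\varphi=0$ more cheaply from the trace equation alone ($\Delta w=0$ and $w=\tfrac{2-m}{m}\neq0$ force $S^\varphi=0$), without passing through $\Ricc^\varphi=\tfrac{S^\varphi}{2(m-1)}\metric$.
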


\begin{proof}
We prove the identity 	
	\begin{equation} \label{DSphi1}
		\left( \frac{m-2}{m} + w \right) \nabla S^\varphi = 0 \qquad \text{on } \, M \, .
	\end{equation}
We take covariant derivative of the first equation in \eqref{phCPE1} to get
	\begin{equation} \label{phCPE2}
		w_{ij,k} - w\left( R^\varphi_{ij,k} - \frac{S^\varphi_k}{m-1}\delta_{ij} \right) - \left( R^\varphi_{ij} - \frac{S^\varphi}{m-1} \delta_{ij} \right) w_k - R^\varphi_{ij,k} + \frac{S^\varphi_k}{m} \delta_{ij} = 0 \, .
	\end{equation}
	We recall the $\varphi$-Schur's identity (see equation (2.10) of \cite{acr21})
	\begin{equation}\label{phi_Schur_gen}
		R^\varphi_{ij,i} = \frac{1}{2} S^\varphi_j - \alpha \varphi^a_{ss}\varphi^a_j
	\end{equation}
	and we trace \eqref{phCPE2} with respect to $i$ and $k$ to infer
	\begin{align*}
		0 & = w_{ij,i} - w R^\varphi_{ij,i} + w \frac{S^\varphi_j}{m-1} - R^\varphi_{ij} w_i + \frac{S^\varphi}{m-1} w_j - R^\varphi_{ij,i} + \frac{S^\varphi_j}{m} \\
		& = w_{ii,j} + w_t R_{tiji} - w \left( \frac{1}{2} S^\varphi_j - \alpha \varphi^a_{kk}\varphi^a_j \right) + \frac{1}{m-1} w S^\varphi_j + \frac{1}{m-1} S^\varphi w_j \\
		& \phantom{=\;} - R^\varphi_{ij} w_i - \left( \frac{1}{2} S^\varphi_j - \alpha \varphi^a_{kk} \varphi^a_j \right) + \frac{S^\varphi_j}{m} \, .
	\end{align*}
	Tracing the first in \eqref{phCPE1} we obtain
	\begin{equation} \label{phCPE3}
		\Delta w + \frac{S^\varphi}{m-1} w = 0 \, .
	\end{equation}
	Thus, using \eqref{phCPE3} and the second in \eqref{phCPE1}, we deduce
	\begin{align*}
		0 & = - \frac{1}{m-1} S^\varphi_j w - \frac{1}{m-1} S^\varphi w_j + w_t R^\varphi_{tj} + \alpha \varphi^a_t \varphi^a_j w_t - \frac{1}{2} S^\varphi_j w + \alpha \varphi^a_{kk} \varphi^a_j w \\
		& \phantom{=\;} + \frac{1}{m-1} S^\varphi_j w + \frac{1}{m-1} S^\varphi w_j - R^\varphi_{tj} w_t - \frac{1}{2} S^\varphi_j + \alpha \varphi^a_{kk} \varphi^a_j + \frac{1}{m} S^\varphi_j \\
		& = - \frac{1}{2} (1+w) S^\varphi_j + (1+w) \alpha \varphi^a_{kk} \varphi^a_j + \frac{1}{m} S^\varphi_j + \alpha \varphi^a_t \varphi^a_j w_t \\
		& = - \frac{1}{2} \left( \frac{m-2}{m} + w \right) S^\varphi_j \, ,
	\end{align*}
	that is, \eqref{DSphi1}.\par
Next, we observe that \eqref{DSphi1} implies $\nabla S^\varphi \equiv 0$ on the open subset $U = \{ w \neq - (m-2)/m\}$. On the other hand, if $x_0 \in \mathrm{Int}(M \backslash U)$, in the sense that there exists $\eps>0$ such that $B_\eps(x_0) \subset M \backslash U$, from $w = -(m-2)/m$ on $B_\eps(x_0)$, \eqref{phCPE3} and $m \ge 3$ we get $S^\varphi \equiv 0$ on $B_\eps(x_0)$. Concluding, $\nabla S^\varphi \equiv 0$ on $U \cup \mathrm{Int}(M \backslash U)$, the complementary of which is a closed set with empty interior. Hence, $\nabla S^\varphi \equiv 0$ on $M$ and thus $S^\varphi$ is constant.\par
To conclude, if $M$ is compact and $w$ is non-constant, then integrating \eqref{phCPE3} against $w$ we get \eqref{eq_intide} (with $S^\varphi$ in place of $S$), which forces $S^\varphi>0$.
\end{proof}

The next result is a Kazdan-Warner type obstruction that holds on every $\varphi$-CPE structure, which for convenience we write as 
	\begin{equation}\label{phiCPE_nelladim_1}
	\begin{cases}
	w_{ji} = (1+w) T^\varphi_{ji} - w \frac{S^\varphi}{m(m-1)} \delta_{ji} \\
		\varphi^a_s w_s = - (1+w) \varphi^a_{tt} \, .
	\end{cases}
	\end{equation}

For constant $\varphi$, the identity reduces to the formula in \cite{hwang00} recalled in the Introduction. 

%
%\begin{equation} \label{phCPEloc}
%	\begin{cases}
%		R^\varphi_{ij} - \frac{S^\varphi}{m} \delta_{ij} = w_{ij} - w R^\varphi_{ij} + w \frac{S^\varphi}{m-1} \delta_{ij} \\
%		\varphi^a_s w_s = - (1+w) \varphi^a_{tt} \, .
%	\end{cases}
%\end{equation}

\begin{lemma} \label{lem_divT}
	Let $(M,\metric)$ be a manifold of dimension $m\geq 2$ with a $\varphi$-CPE structure as in \eqref{phCPE}. Then
	\begin{equation} \label{div_Tsharp}
		\div(T^\varphi(\nabla w,\,\cdot\,)^\sharp) = \alpha(1+w)|\tau(\varphi)|^2 + (1+w)|T^\varphi|^2 \, .
	\end{equation}
In particular, if $M$ is compact,
	\begin{equation} \label{div_T_int}
		\int_M (1+w)|T^\varphi|^2 = - \alpha \int_M (1+w)|\tau(\varphi)|^2 \, .
	\end{equation}
\end{lemma}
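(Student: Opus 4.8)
The plan is to expand the divergence on the left of \eqref{div_Tsharp} in a local orthonormal frame and then substitute both equations of the $\varphi$-CPE structure in the form \eqref{phiCPE_nelladim_1}. Writing $T^\varphi(\nabla w,\,\cdot\,)^\sharp$ as the vector field with components $T^\varphi_{ij}w_i$, one has
\[
\div\big(T^\varphi(\nabla w,\,\cdot\,)^\sharp\big) = T^\varphi_{ij,j}\,w_i + T^\varphi_{ij}\,w_{ij},
\]
so it suffices to handle the two summands separately.

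For the second summand I would insert the first equation of \eqref{phiCPE_nelladim_1}, namely $w_{ij} = (1+w)T^\varphi_{ij} - w\frac{S^\varphi}{m(m-1)}\delta_{ij}$, and contract against the symmetric tensor $T^\varphi$: since $T^\varphi$ is trace-free the $\delta_{ij}$ term disappears and this contribution is exactly $(1+w)|T^\varphi|^2$. For the first summand I would first compute the divergence of $T^\varphi$. As $T^\varphi_{ij} = R^\varphi_{ij} - \frac{S^\varphi}{m}\delta_{ij}$, the $\varphi$-Schur identity \eqref{phi_Schur_gen} gives $T^\varphi_{ij,j} = \frac{m-2}{2m}S^\varphi_i - \alpha\varphi^a_{ss}\varphi^a_i$; contracting with $w_i$ and then using the second equation of \eqref{phiCPE_nelladim_1}, $\varphi^a_s w_s = -(1+w)\varphi^a_{tt}$, turns the $\varphi$-term into $\alpha(1+w)|\tau(\varphi)|^2$. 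The remaining term $\frac{m-2}{2m}\langle\nabla S^\varphi,\nabla w\rangle$ vanishes: for $m=2$ its coefficient is $0$, while for $m\geq 3$ Proposition \ref{prop_DS} forces $S^\varphi$ to be constant (the identity being local, we may assume $M$ connected). Adding the two contributions yields \eqref{div_Tsharp}.

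Finally, when $M$ is compact the identity \eqref{div_T_int} follows by integrating \eqref{div_Tsharp} over $M$: the integral of a divergence over a closed manifold vanishes, leaving $\int_M(1+w)|T^\varphi|^2 = -\alpha\int_M(1+w)|\tau(\varphi)|^2$. The only genuine point of care is the bookkeeping of $\div T^\varphi$ through the $\varphi$-Schur identity and the correct disposal of the two ``extra'' terms — the trace term, killed by trace-freeness of $T^\varphi$, and the $\nabla S^\varphi$ term, killed by Proposition \ref{prop_DS} (or, for $m=2$, by its vanishing coefficient); everything else is a routine contraction.
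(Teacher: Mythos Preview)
Your proof is correct and follows essentially the same route as the paper: expand the divergence in a local orthonormal frame, use the $\varphi$-Schur identity \eqref{phi_Schur_gen} to compute $\div T^\varphi$, kill the $\nabla S^\varphi$ term via Proposition~\ref{prop_DS}, and then contract the Hessian term against \eqref{phiCPE_nelladim_1} using trace-freeness of $T^\varphi$. Your explicit handling of the $m=2$ case (where the coefficient $\frac{m-2}{2m}$ vanishes, so constancy of $S^\varphi$ is not needed) is in fact slightly more careful than the paper, which invokes Proposition~\ref{prop_DS} without commenting on the dimension.
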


\begin{proof}
	We compute
	\begin{align*}
		\div(T^\varphi(\nabla w,\,\cdot\,)^\sharp) & = (T^\varphi_{ik} w_k)_i \\
		& = T^\varphi_{ik,i} w_k + T^\varphi_{ik} w_{ki} \\
		& = \left(R^\varphi_{ik,i} - \frac{S^\varphi_i}{m} \delta_{ik}\right) w_k + T^\varphi_{ik} w_{ki} \, .
	\end{align*}
Using the $\varphi$-Schur's identity \eqref{phi_Schur_gen} and the constancy of $S^\varphi$ which follows from Proposition \ref{prop_DS}, 
	$$
		\div(T^\varphi(\nabla w,\,\cdot\,)^\sharp) = - \alpha\varphi^a_{tt}\varphi^a_k w_k + T^\varphi_{ik} w_{ki} \, .
	$$
	Then, the validity of \eqref{phiCPE_nelladim_1} gives \eqref{div_Tsharp}. Equation \eqref{div_T_int} follows immediately from \eqref{div_Tsharp}.
\end{proof}

As in \cite{bal19,bal20}, the proof of Theorem \ref{thm_Sph} depends on an integral identity, \eqref{int_form2} below, obtained by comparing two different Bochner formulas. Before, we need to recall a few other facts and definitions. Although not strictly necessary in what follows, but to simplify notations, we introduce the linear map
$$
	\mathscr W^\varphi : S^2_0(M) \to S^2_0(M) \, 
$$
on the space $S^2_0(M)$ of traceless $2$-covariant, symmetric tensors on $M$, defined, for $\beta = \beta_{ij} \, \theta^i \otimes \theta^j \in S^2_0(M)$, by setting
\begin{equation}
	\mathscr W^\varphi(\beta) = \left[ W^\varphi_{tikj} - \frac{\alpha}{2} \varphi^a_t \left( \varphi^a_i \delta_{kj} + \varphi^a_j \delta_{ki} \right) \right] \beta_{tk} \, \theta^i \otimes \theta^j \, .
\end{equation}
Obviously indices $1\leq a,b,\dots\leq n = \dim N$ and $1\leq i,j,\dots\leq m$ refer to local orthonormal coframes respectively on $N$ and $M$. Note that $\mathscr W^\varphi$ is well defined and self-adjoint with respect to the standard extension of $\metric$ to $S^2_0(M)$, that we will denote with the same symbol. This is crucial for the validity of inequality \eqref{WT_ineq} that we shall use later.

We let $C^\varphi$ be the $\varphi$-Cotton tensor, defined as the obstruction to the $\varphi$-Schouten tensor $A^\varphi$ in \eqref{def_Schou} to be Codazzi. Thus, its components in a local orthonormal coframe are given by
$$
	C^\varphi_{ijk} = A^\varphi_{ij,k} - A^\varphi_{ik,j} \, .
$$
%One can show, see \cite{acr21}, that in terms of the usual Cotton tensor
%$$
%	C^\varphi_{ijk} = C_{ijk} - \alpha \left\{ \varphi^a_{ik}\varphi^a_j - \varphi^a_{ij}\varphi^a_k - \frac{\varphi^a_t}{m-1} \left( \varphi^a_{tk} \delta_{ij} - \varphi^a_{tj}\delta_{ik} \right) \right\} \, .
%$$
A calculation in \cite{acr21} shows the validity of the following symmetries:
\begin{equation} \label{C_sym}
	\begin{cases}
		C^\varphi_{ijk} = - C^\varphi_{ikj} \qquad \text{and thus } \, C^\varphi_{ikk} = 0 \\
		C^\varphi_{kki} = \alpha \varphi^a_{kk} \varphi^a_i \\
		C^\varphi_{ijk} + C^\varphi_{jki} + C^\varphi_{kij} = 0 \, .
	\end{cases}
\end{equation}

Our argument to prove Theorem \ref{thm_Sph} keeps the same guidelines as \cite{bal19}, but with some simplifications. We split it into some lemmas. The first step is the following Bochner identity:

\begin{lemma} \label{lem_Boch}
	Let $(M,\metric)$ be a manifold of dimension $m\geq 3$, let $\varphi : (M,\metric) \to (N,\metric_N)$ be a smooth map and let $\alpha\in\RR$. Assume $S^\varphi$ is constant. Then
	\begin{equation} \label{Boch_Ric}
		\begin{split}
			\frac{1}{2}\Delta|T^\varphi|^2 & = |\nabla T^\varphi|^2 + \frac{m}{m-2} \tr(T^\varphi)^3 + \frac{1}{m-1} S^\varphi|T^\varphi|^2 \\
			& \phantom{=\;} - \langle \mathscr W^\varphi(T^\varphi), T^\varphi \rangle + \left( C^\varphi_{ijk} R^\varphi_{ij} \right)_k - \frac{1}{2}|C^\varphi|^2 - C^\varphi_{kki,j} R^\varphi_{ij} \, .
		\end{split}
	\end{equation}
\end{lemma}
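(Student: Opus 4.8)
The plan is to derive \eqref{Boch_Ric} by computing $\frac{1}{2}\Delta|T^\varphi|^2$ in two ways and reconciling them, exactly in the spirit of the standard Bochner formula for the traceless Ricci tensor but carrying along the $\varphi$-corrections. First I would write
\[
\frac{1}{2}\Delta|T^\varphi|^2 = |\nabla T^\varphi|^2 + T^\varphi_{ij} \Delta T^\varphi_{ij},
\]
and the whole problem reduces to expressing the rough Laplacian $\Delta T^\varphi_{ij} = T^\varphi_{ij,kk}$ in terms of curvature. Since $S^\varphi$ is constant, $T^\varphi_{ij} = R^\varphi_{ij} - \frac{S^\varphi}{m}\delta_{ij}$ and $\nabla T^\varphi = \nabla \Ricc^\varphi$, so it suffices to commute derivatives on $R^\varphi_{ij,kk}$.

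The key computational steps, in order: (1) starting from the definition $C^\varphi_{ijk} = A^\varphi_{ij,k} - A^\varphi_{ik,j}$ with $A^\varphi = \Ricc^\varphi - \frac{S^\varphi}{2(m-1)}\metric$ and $S^\varphi$ constant, express $C^\varphi$ in terms of $\nabla \Ricc^\varphi$ and take a divergence to get $R^\varphi_{ij,kk}$ in terms of $C^\varphi_{ijk,k}$, $R^\varphi_{ik,jk}$ and lower order terms; (2) commute the second covariant derivatives in $R^\varphi_{ik,jk}$ using the Ricci (commutation) identities, which introduces $R^\varphi_{ik}$ and $R^\varphi_{jk}$ contracted against the full Riemann tensor, plus a term $R^\varphi_{ik,kj}$; (3) rewrite the twice-contracted Bianchi-type term $R^\varphi_{ik,k}$ via the $\varphi$-Schur identity \eqref{phi_Schur_gen}, $R^\varphi_{ik,k} = \frac{1}{2}S^\varphi_i - \alpha\varphi^a_{ss}\varphi^a_i = -\alpha\varphi^a_{ss}\varphi^a_i$ by constancy of $S^\varphi$, and differentiate once more; (4) substitute $\Riem = W^\varphi + \frac{1}{m-2}A^\varphi\owedge\metric$ to trade the Riemann curvature terms for the $\varphi$-Weyl tensor and Ricci terms — this is precisely where the operator $\mathscr W^\varphi$ and the cubic term $\tr(T^\varphi)^3$ and the $\frac{1}{m-1}S^\varphi|T^\varphi|^2$ term are produced, together with the $\alpha\varphi^a_t(\varphi^a_i\delta_{kj}+\varphi^a_j\delta_{ki})$ correction hidden inside $\mathscr W^\varphi$; (5) collect the $C^\varphi$ terms: contracting $C^\varphi_{ijk,k}$ against $T^\varphi_{ij}$, integrate-by-parts-style manipulate into the total divergence $(C^\varphi_{ijk}R^\varphi_{ij})_k$ minus $C^\varphi_{ijk}R^\varphi_{ij,k}$, and use the first Bianchi symmetry of $C^\varphi$ together with $C^\varphi_{ijk} = A^\varphi_{ij,k}-A^\varphi_{ik,j}$ to see $C^\varphi_{ijk}R^\varphi_{ij,k} = \frac{1}{2}|C^\varphi|^2$; finally the term $C^\varphi_{kki,j}R^\varphi_{ij}$ appears from the trace part $C^\varphi_{kki} = \alpha\varphi^a_{kk}\varphi^a_i$ in \eqref{C_sym} that does not cancel because $W^\varphi$ (unlike $W$) is not totally trace-free.

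I expect the main obstacle to be bookkeeping rather than any single conceptual difficulty: keeping track of the many $\alpha$-dependent correction terms so that they assemble exactly into $\mathscr W^\varphi$ and into the residual $C^\varphi_{kki,j}R^\varphi_{ij}$ term, and making sure the index gymnastics in the two commutations (step 2) and in the Bianchi rearrangement (step 5) are done consistently with the sign conventions for $\Riem$ and $\Ricc$ fixed at the start of Section \ref{sec_thm_Sph}. A useful sanity check is to set $\varphi$ constant: then $\varphi^a_i = 0$, $\mathscr W^\varphi$ reduces to the usual Weyl action, $C^\varphi_{kki} = 0$ kills the last term, and \eqref{Boch_Ric} must collapse to the classical Bochner identity for the traceless Ricci tensor on a constant scalar curvature manifold used in \cite{bal19,bal20}; matching that limit pins down all the numerical coefficients. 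The only other point requiring care is the contraction identity $\tr(T^\varphi)^3 = T^\varphi_{ij}T^\varphi_{jk}T^\varphi_{ki}$ and its coefficient $\frac{m}{m-2}$, which emerges from the $A^\varphi\owedge\metric$ part of $\Riem$ after taking the trace-free part, and should be double-checked against the Kulkarni–Nomizu expansion.
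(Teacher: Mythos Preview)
Your plan is sound and would produce \eqref{Boch_Ric}, but it is not the route the paper takes. The paper simply quotes the general Bochner identity for $|T^\varphi|^2$ already established as equation (3.6) in \cite{acr21} (valid without assuming $S^\varphi$ constant), then specializes: constancy of $S^\varphi$ kills the $\Hess(S^\varphi)$ term, and the only remaining work is to rewrite $\tr(\div C^\varphi\circ T^\varphi)$ and $\tr(T^\varphi\circ\nabla\tr C^\varphi)$ using the product rule $C^\varphi_{ijk,k}R^\varphi_{ij} = (C^\varphi_{ijk}R^\varphi_{ij})_k - \tfrac{1}{2}|C^\varphi|^2$. Your proposal instead re-derives that cited identity from scratch, starting from $\tfrac{1}{2}\Delta|T^\varphi|^2 = |\nabla T^\varphi|^2 + T^\varphi_{ij}T^\varphi_{ij,kk}$, commuting derivatives, invoking the $\varphi$-Schur identity, and substituting $\Riem = W^\varphi + \tfrac{1}{m-2}A^\varphi\owedge\metric$. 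This is exactly the computation behind (3.6) of \cite{acr21}, so the mathematics is the same; your version is self-contained but considerably longer, while the paper's buys brevity at the cost of relying on the reference. Your step (5) is precisely the one manipulation the paper does keep explicit, and your sanity check (setting $\varphi$ constant to recover the classical formula) is a good way to catch sign or coefficient slips in the bookkeeping you correctly flag as the main hazard.
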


\begin{remark}
\emph{From now on we indicate a $2$-covariant tensor and its corresponding endomorphism with the same letter. Thus $(T^\varphi)^3$ means the composition of endomorphisms $T^\varphi \circ T^\varphi \circ T^\varphi$.
}
\end{remark}

\begin{proof}
	From equation (3.6) of \cite{acr21} we have
	\begin{equation} \label{Boch_ACR}
		\begin{split}
			\frac{1}{2}\Delta|T^\varphi|^2 & = |\nabla T^\varphi|^2 + \frac{m-2}{2(m-1)} \tr(T^\varphi \circ \Hess(S^\varphi)) + \frac{m}{m-2} \tr(T^\varphi)^3 + \frac{S^\varphi}{m-1} |T^\varphi|^2 \\
			& \phantom{=\;} + \tr(\div C^\varphi \circ T^\varphi) - \langle \mathscr W^\varphi(T^\varphi), T^\varphi \rangle - \tr(T^\varphi \circ \nabla \tr C^\varphi)
		\end{split}
	\end{equation}
	where we have set
	\begin{equation} \label{divC}
		\div C^\varphi = C^\varphi_{ijk,k} \, \theta^i \otimes \theta^j \, , \qquad \tr C^\varphi = C^\varphi_{kki} \, \theta^i \,.
	\end{equation}
	Since $S^\varphi$ is constant, $\Hess(S^\varphi) = 0$, while using \eqref{divC} we deduce
	\begin{align}
		\label{divC1}
		\tr(T^\varphi\circ\nabla\tr C^\varphi) & = C^\varphi_{kki,j} R^\varphi_{ij} - \frac{S^\varphi}{m} C^\varphi_{kki,i} \\
		\label{divC2}
		\tr(\div C^\varphi\circ T^\varphi) & = C^\varphi_{ijk,k} R^\varphi_{ij} - \frac{S^\varphi}{m} C^\varphi_{ssk,k} \, .
	\end{align}
Again using the constancy of $S^\varphi$ and the relation between $R^\varphi_{ij,k}$ and $C^\varphi_{ijk}$ we get
	\begin{equation}
		C^\varphi_{ijk,k} R^\varphi_{ij} = (C^\varphi_{ijk} R^\varphi_{ij})_k - \frac{1}{2}|C^\varphi|^2 \, .
	\end{equation}
Inserting the above informations into \eqref{Boch_ACR} we conclude \eqref{Boch_Ric}.
%	\begin{equation} \label{Boch_Ric1}
%		\begin{split}
%			\frac{1}{2}\Delta|\Ricc^\varphi|^2 & = |\nabla\Ricc^\varphi|^2 + \frac{m}{m-2}\tr(T^\varphi)^3 + \frac{1}{m-1} S^\varphi|T^\varphi|^2 \\
%			& \phantom{=\;} - \langle \mathscr W^\varphi(T^\varphi), T^\varphi \rangle + C^\varphi_{ijk,k} R^\varphi_{ij} - C^\varphi_{kki,j} R^\varphi_{ij} \, .
%		\end{split}
%	\end{equation}
%	It is easy to see, using constancy of $S^\varphi$ and the relation between $R^\varphi_{ij,k}$ and $C^\varphi_{ijk}$ that
%	\begin{equation}
%		C^\varphi_{ijk,k} R^\varphi_{ij} = (C^\varphi_{ijk} R^\varphi_{ij})_k - \frac{1}{2}|C^\varphi|^2 \, ,
%	\end{equation}
%	hence, from \eqref{Boch_Ric1} we obtain \eqref{Boch_Ric}.
\end{proof}

Note that the validity of Lemma \ref{lem_Boch} is independent of that of the $\varphi$-CPE structure \eqref{phCPE}. Now the idea is to make formula \eqref{Boch_Ric} interact with $1+w$ in order to be able to use \eqref{div_T_int}. Towards this aim we observe that tracing the first equation in \eqref{phiCPE_nelladim_1} we obtain
\begin{equation} \label{eq1}
	\frac{1}{2} \Delta(1+w)^2 = - \frac{1}{m-1} S^\varphi w (1+w) + |\nabla w|^2 \, ,
\end{equation}
thus, when $M$ is compact, integrating against $|T^\varphi|^2$ gives
%\[
%	\int_M |T^\varphi|^2 \left( 2|\nabla w|^2 - \frac{2}%{m-1} S^\varphi(1+w)w \right) = \int_M |T^\varphi|^2 %\Delta(1+w)^2 =\int_M (1+w)^2 \Delta|T^\varphi|^2
%\]
%so that
$$
	\int_M |T^\varphi|^2 |\nabla w|^2 = \int_M \frac{S^\varphi}{m-1}(1+w)w|T^\varphi|^2 + \frac{1}{2} \int_M (1+w)^2 \Delta|T^\varphi|^2.
$$
%and using \eqref{div_T_int} and the constancy of $S^\varphi$ we obtain
%\begin{equation} \label{eq2}
%	\begin{split}
%		\int_M |T^\varphi|^2 |\nabla w|^2 & = \frac{1}{2} \int_M (1+w)^2 \left( \Delta|T^\varphi|^2 + \frac{S^\varphi}{m-1} |T^\varphi|^2 \right) \\
%		& \phantom{=\;} + \alpha \int_M \frac{S^\varphi}{m-1}(1+w)|\tau(\varphi)|^2 \, .
%	\end{split}
%\end{equation}
We insert \eqref{Boch_Ric} into the above and integrate by parts the term with $\left( C^\varphi_{ijk} R^\varphi_{ij} \right)_k$ to obtain
\begin{equation} \label{eq3}
	\begin{split}
		\int_M |T^\varphi|^2 |\nabla w|^2 & = \int_M (1+w)^2 |\nabla T^\varphi|^2 + \frac{m}{m-2} \int_M (1+w)^2 \tr(T^\varphi)^3 \\
		& \phantom{=\;} + \frac{2}{m-1} \int_M (1+w)^2 S^\varphi|T^\varphi|^2 - \int_M (1+w)^2 \langle \mathscr W^\varphi(T^\varphi), T^\varphi \rangle \\
		& \phantom{=\;} - \frac{S^\varphi}{m-1} \int_M (1+w)|T^\varphi|^2 - \frac{1}{2} \int_M (1+w)^2 |C^\varphi|^2 \\
		& \phantom{=\;} - \int_M (1+w)^2 C^\varphi_{kki,j} R^\varphi_{ij} - 2\int_M (1+w)w_k C^\varphi_{ijk} R^\varphi_{ij} \, .
	\end{split}
\end{equation}

So far, to get \eqref{eq3} we did not use \eqref{phCPE} in its full strength, just \eqref{eq1} and the constancy of $S^\varphi$. Hereafter, we shall exploit all of the assumptions in Theorem \ref{thm_Sph}, that is, the validity of the whole of \eqref{phiCPE_nelladim_1} with $\tau(\varphi) = 0$. We first get
	\begin{equation}\label{eq_tauvarphiu0}
	\varphi^a_s w_s = - (1+w) \varphi^a_{tt} = 0 
	\end{equation}
and secondly, from the constancy of $S^\varphi$ and again by $\tau(\varphi) = 0$, the $\varphi$-Schur identity \eqref{phi_Schur_gen} becomes 
\begin{equation} \label{phi_Schur}
	T^\varphi_{ji,i} = R^\varphi_{ji,i} = \frac{1}{2} (S^\varphi)_j - \alpha \varphi^a_{tt}\varphi^a_j = 0.
\end{equation}	
Taking covariant derivative of the CPE equation \eqref{phiCPE_nelladim_1} we have
\begin{equation} \label{eq10}
	(1+w) R^\varphi_{kj,i} = w_{kj,i} - w_i R^\varphi_{jk} + \frac{S^\varphi}{m-1} w_i \delta_{jk} \, .
\end{equation}
Interchanging the role of $j$ and $i$, subtracting the two identities and
%gives
%\begin{equation} \label{eq11}
%	(1+w) R^\varphi_{ik,j} = w_{ikj} - w_j R^\varphi_{ik} + \frac{S^\varphi}{m-1} w_j \delta_{ik} \, .
%\end{equation}
%Subtracting \eqref{eq11} from \eqref{eq10} and using Ricci commutation relations for $w_{kji}$ we infer
using Ricci commutation relations for $w_{kj,i}$ we infer
\begin{equation} \label{eq12_0}
	(1+w) (R^\varphi_{kj,i} - R^\varphi_{ki,j}) = w_t R_{tkji} + \frac{S^\varphi}{m-1} ( w_i \delta_{jk} - w_j \delta_{ik} ) - (w_i R^\varphi_{jk} - w_j R^\varphi_{ik}) \, .
\end{equation}
Taking into account that $S^\varphi$ is constant, $C^\varphi_{kji} = R^\varphi_{kj,i} - R^\varphi_{ki,j}$ and we obtain
%\begin{equation} \label{eq12_01}
%	(1+w)C^\varphi_{ijk} = w_t R_{tijk} + \frac{S^\varphi}{m-1} ( w_k \delta_{ij} - w_j \delta_{ik} ) - (w_k R^\varphi_{ij} - w_j R^\varphi_{ik}) \, .
%\end{equation}
\begin{equation} \label{eq12_01}
	(1+w)C^\varphi_{kji} = w_t R_{ijkt} + \frac{S^\varphi}{m-1} ( w_i \delta_{jk} - w_j \delta_{ik} ) - (w_i R^\varphi_{jk} - w_j R^\varphi_{ik}) \, .
\end{equation}
We multiply the above relation by $R^\varphi_{kj}$ and take divergence to get
	\begin{equation}\label{eq_diverY}
	\begin{array}{l}
	\disp \big( w_j R^\varphi_{ik}R^\varphi_{kj} + w_t R_{ijkt} R^\varphi_{jk}\big)_i \\[0.3cm]
	\qquad = \left((1+w)C^\varphi_{kji}R^\varphi_{kj} + w_i \left( |\Ric^\varphi|^2 - \frac{(S^\varphi)^2}{m-1} \right) + \frac{S^\varphi  w_j R^\varphi_{ji}}{m-1} \right)_i \\[0.4cm]
	\qquad = \left((1+w)C^\varphi_{kji}R^\varphi_{kj} + w_i |T^\varphi|^2 + \frac{S^\varphi  w_j T^\varphi_{ji}}{m-1} \right)_i \\[0.4cm]	
		\qquad = \left((1+w) C^\varphi_{kji}R^\varphi_{kj}\right)_i + \langle \nabla w, \nabla|T^\varphi|^2 \rangle + \frac{S^\varphi}{m-1} |T^\varphi|^2, 	
	\end{array}
	\end{equation}
where in the last equality we used \eqref{phCPE} in the form \eqref{phiCPE_nelladim_1}, its trace
\begin{equation} \label{Delta_w_1}
	\Delta w = - \frac{S^\varphi}{m-1} w
\end{equation}
and the $\varphi$-Schur identity \eqref{phi_Schur}. We examine the left hand side of \eqref{eq_diverY}, that is, 
	\[
	(\ast) \doteq \big( w_j R^\varphi_{ik}R^\varphi_{kj} + w_t R_{ijkt}R^\varphi_{jk}\big)_i.
	\]
Expanding the divergence, 
\begin{align*}
	(\ast) & = w_{ij} R^\varphi_{ik} R^\varphi_{kj} + w_j R^\varphi_{ik} R^\varphi_{kj,i} + w_j R^\varphi_{ik,i} R^\varphi_{kj} \\
	& \phantom{=\;} + w_{ti} R_{ijkt} R^\varphi_{jk} + w_t R_{ijkt,i} R^\varphi_{jk} + w_t R_{ijkt} R^\varphi_{jk,i} \, .
%	& = w_{ij} R^\varphi_{ik} R^\varphi_{kj} + w_{ti} R_{ijkt} R^\varphi_{jk} + w_j R^\varphi_{ik} R^\varphi_{kj,i}. \\
%	& \phantom{=\;} + w_t R_{ijkt,i} R^\varphi_{jk} + w_t R_{ijkt} R^\varphi_{jk,i} - \alpha w_j R^\varphi_{kj} \varphi^a_{tt} \varphi^a_k \, .
\end{align*}
Tracing the second Bianchi identity, notice that
\begin{equation} \label{eq15}
	R_{ijkt,i} = R^\varphi_{jt,k} - R^\varphi_{jk,t} + \alpha(\varphi^a_{jk} \varphi^a_t - \varphi^a_{jt} \varphi^a_k) \, .
\end{equation}
Hence, using \eqref{eq_tauvarphiu0}, \eqref{phi_Schur} and \eqref{eq15}, 
\begin{align*}
	(\ast) & = w_j R^\varphi_{ik} ( R^\varphi_{kj,i} - R^\varphi_{ki,j} + R^\varphi_{ki,j} ) + w_t R^\varphi_{jk} ( R^\varphi_{jt,k} - R^\varphi_{jk,t}) \\
	& \phantom{=\;} + \alpha w_t R^\varphi_{jk} (\varphi^a_{jk} \varphi^a_t - \varphi^a_{jt} \varphi^a_k) + \frac{1}{2} w_t R_{ijkt} (R^\varphi_{jk,i} - R^\varphi_{ik,j}) \\
	& \phantom{=\;} + w_{ij} R^\varphi_{ik} R^\varphi_{kj} + w_{ti} R_{ijkt} R^\varphi_{jk} \\
	& = \frac{1}{2} \langle \nabla w,\nabla|\Ricc^\varphi|^2 \rangle + w_j C^\varphi_{kji} R^\varphi_{ik} + \frac{1}{2} w_t R_{ijkt} C^\varphi_{kji} \\
	& \phantom{=\;} + w_{ij} R^\varphi_{ik} R^\varphi_{kj} + w_{ti} R_{ijkt} R^\varphi_{jk} -\alpha w_t R^\varphi_{jk}\varphi^a_{jt} \varphi^a_k + w_t R^\varphi_{jk} C^\varphi_{jtk} \\
	& = 2w_jC^\varphi_{kji}R^\varphi_{ik} + \frac{1}{2} \langle \nabla w,\nabla |T^\varphi|^2 \rangle + \frac{1}{2} w_t R_{ijkt} C^\varphi_{kji} \\
	& \phantom{=\;} + w_{ij} R^\varphi_{ik} R^\varphi_{kj} + w_{ti} R_{ijkt} R^\varphi_{jk} - \alpha w_t R^\varphi_{jk}\varphi^a_{jt} \varphi^a_k \, .
\end{align*}
We next exploit \eqref{eq12_01} to remove the term $w_t R_{ijkt}$. Because of the second in \eqref{C_sym}, which in our setting becomes $C^\varphi_{kki}=0$, we obtain 
	\begin{align*}
	(\ast) & = 2w_jC^\varphi_{kji}R^\varphi_{ik} + \frac{1}{2} \langle \nabla w,\nabla |T^\varphi|^2 \rangle \\ 
	& \phantom{=\;} + \frac{1}{2}\left[(1+w)C^\varphi_{kji} - \frac{S^\varphi}{m-1} ( w_i \delta_{jk} - w_j \delta_{ik} ) + (w_i R^\varphi_{jk} - w_j R^\varphi_{ik})\right] C^\varphi_{kji} \\
	& \phantom{=\;} + w_{ij} R^\varphi_{ik} R^\varphi_{kj} + w_{ti} R_{ijkt} R^\varphi_{jk} - \alpha w_t R^\varphi_{jk}\varphi^a_{jt} \varphi^a_k \\
	& = w_jC^\varphi_{kji}R^\varphi_{ik} + \frac{1}{2} \langle \nabla w,\nabla |T^\varphi|^2 \rangle + \frac{1}{2}(1+w)|C^\varphi|^2  \\ 
	& \phantom{=\;} + w_{ij} R^\varphi_{ik} R^\varphi_{kj} + w_{ti} R_{ijkt} R^\varphi_{jk} - \alpha w_t R^\varphi_{jk}\varphi^a_{jt} \varphi^a_k \, .
	\end{align*}
Using the Ricci commutation relations for the tensor $\Ricc^\varphi$:
$$
	R^\varphi_{st,ji} = R^\varphi_{st,ij} + R^\varphi_{lt} R_{lsji} + R^\varphi_{sl} R_{ltji} \, ,
$$
and the $\varphi$-Schur identity \eqref{phi_Schur}, which implies $R^\varphi_{ik,kt} = 0$, we deduce 
	\[
	\begin{array}{lcl}
	R^\varphi_{jk} R_{ijkt} &= & \disp R^\varphi_{jk} R_{jitk} = R^\varphi_{ik,tk} - R^\varphi_{ij} R_{jt} \\[0.2cm]
	& = & \disp R^\varphi_{ik,tk} - R^\varphi_{ij}R^\varphi_{jt} - \alpha R^\varphi_{ij} \varphi^a_j \varphi^a_t \, .
	\end{array}
	\]
Plugging into the above, we get
	\begin{align*}
	(\ast) & = w_jC^\varphi_{kji}R^\varphi_{ik} + \frac{1}{2} \langle \nabla w,\nabla |T^\varphi|^2 \rangle + \frac{1}{2}(1+w)|C^\varphi|^2  \\ 
	& w_{ti}R^\varphi_{ik,tk} - \alpha w_{ti} R^\varphi_{ij} \varphi^a_j \varphi^a_t - \alpha w_t R^\varphi_{jk}\varphi^a_{jt} \varphi^a_k \, .
	\end{align*}
Differentiating \eqref{eq_tauvarphiu0} we get 
	\[
	w_t \varphi^a_{ti} = - w_{ti}\varphi^a_t,
	\]
and therefore 
	\[
	w_{ti} R^\varphi_{ij} \varphi^a_j \varphi^a_t + w_t R^\varphi_{jk}\varphi^a_{jt} \varphi^a_k = \disp - w_t \varphi^a_{ti}R^\varphi_{ij} \varphi^a_j + w_t R^\varphi_{jk}\varphi^a_{jt} \varphi^a_k = 0. 
	\]
Inserting into the above, we infer
	\[
	(\ast) = w_jC^\varphi_{kji}R^\varphi_{ik} + \frac{1}{2} \langle \nabla w,\nabla |T^\varphi|^2 \rangle + \frac{1}{2}(1+w)|C^\varphi|^2 + w_{ti}R^\varphi_{ik,tk}.
	\]
Plugging into \eqref{eq_diverY} and rearranging, we obtain
	\begin{equation}\label{eq_diverY_2}
	\begin{array}{l}
		\disp w_jC^\varphi_{kji}R^\varphi_{ik} + \frac{1}{2}(1+w)|C^\varphi|^2 + w_{ti}R^\varphi_{ik,tk} \\[0.3cm]
		\qquad = \left((1+w) C^\varphi_{kji}R^\varphi_{kj}\right)_i + \dfrac{1}{2} \langle \nabla w, \nabla|T^\varphi|^2 \rangle + \dfrac{S^\varphi}{m-1} |T^\varphi|^2 \, .  	
	\end{array}
	\end{equation}
We study the term $w_{ti}R^\varphi_{ik,tk}$. Again from the $\varphi$-Schur identity \eqref{phi_Schur} and from \eqref{phiCPE_nelladim_1},
	\[
	\begin{array}{lcl}
	w_{ti}R^\varphi_{ik,tk} & = & \disp (1+w) T^\varphi_{it} R^\varphi_{ik,tk} = (1+w) T^\varphi_{it} T^\varphi_{ik,tk} \\[0.2cm]
	& = & \disp \big( (1+w)T^\varphi_{it} T^\varphi_{ik,t} \big)_k - w_k T^\varphi_{it} T^\varphi_{ik,t} - (1+w) T^\varphi_{it,k} T^\varphi_{ik,t} \, .
	\end{array}
	\]
Next, since $S^\varphi$ is constant, $C^\varphi_{ikt} =  T^\varphi_{ik,t} -  T^\varphi_{it,k}$ and we deduce
	\[
	\begin{array}{lcl}
	w_{ti}R^\varphi_{ik,tk} & = & \disp \big( (1+w)T^\varphi_{it}C^\varphi_{ikt} \big)_k + \frac{1}{2}\diver \big( (1+w)\nabla |T^\varphi|^2\big) \\[0.2cm]
	& & \disp - w_k T^\varphi_{it} T^\varphi_{ik,t} - (1+w) T^\varphi_{it,k} T^\varphi_{ik,t} \, .
	\end{array}
	\]
Using the identities
	\[
	\begin{array}{lcl}
	T^\varphi_{it,k}T^\varphi_{ik,t} & = & \disp R^\varphi_{it,k}R^\varphi_{ik,t} = |\nabla\Ricc^\varphi|^2 - \frac{1}{2}|C^\varphi|^2 = |\nabla T^\varphi|^2 - \frac{1}{2}|C^\varphi|^2 \\[0.2cm]
	T^\varphi_{it}T^\varphi_{ik,t} & = & \disp T^\varphi_{it}(T^\varphi_{it,k} - C^\varphi_{itk}) = \frac{1}{2} \big( |T^\varphi|^2\big)_k - R^\varphi_{it}C^\varphi_{itk}
	\end{array} 
	\]
(recall that $C^\varphi_{iik}=0$ by the second in \eqref{C_sym} and $\tau(\varphi)=0$), we conclude
	\[
	\begin{array}{lcl}
	w_{ti}R^\varphi_{ik,tk} & = & \disp \big( (1+w)T^\varphi_{it}C^\varphi_{ikt} \big)_k + \frac{1}{2} \diver \big( (1+w)\nabla |T^\varphi|^2\big) \\[0.2cm]
	& & \disp - \frac{1}{2} \langle \nabla w, \nabla |T^\varphi|^2 \rangle + w_k R^\varphi_{it}C^\varphi_{itk} - (1+w)|\nabla T^\varphi|^2 + \frac{1}{2}(1+w)|C^\varphi|^2 \, .
	\end{array}
	\]
Identity \eqref{eq_diverY_2} therefore becomes
	\[
	\begin{array}{l}
	\disp w_jC^\varphi_{kji}R^\varphi_{ik} + (1+w)|C^\varphi|^2 + \disp \big( (1+w)T^\varphi_{it}C^\varphi_{ikt} \big)_k \\[0.2cm]
 \qquad  \disp + \frac{1}{2} \diver \big( (1+w)\nabla |T^\varphi|^2\big) + w_k R^\varphi_{it}C^\varphi_{itk} - (1+w)|\nabla T^\varphi|^2 \\[0.3cm]
		\qquad = \left((1+w) C^\varphi_{kji}R^\varphi_{kj}\right)_i + \langle \nabla w, \nabla|T^\varphi|^2 \rangle + \dfrac{S^\varphi}{m-1} |T^\varphi|^2 \, .
	\end{array}
	\]
Simplifying and using that, by \eqref{C_sym} and $\tau(\varphi) = 0$, $T^\varphi_{it}C^\varphi_{ikt} = R^\varphi_{it}C^\varphi_{ikt} = - R^\varphi_{it}C^\varphi_{itk}$, 
	\[
	\begin{array}{l}
	\disp (1+w)|C^\varphi|^2 + \frac{1}{2}\diver \big( (1+w)\nabla |T^\varphi|^2\big) - (1+w)|\nabla T^\varphi|^2 \\[0.3cm]
		\qquad = 2\left((1+w) C^\varphi_{kji}R^\varphi_{kj}\right)_i + \langle \nabla w, \nabla|T^\varphi|^2 \rangle + \dfrac{S^\varphi}{m-1} |T^\varphi|^2.	
	\end{array}
	\]
%whence rearranging we deduce the following identity: 
%	\begin{equation}\label{eq_diverY_3}
%	\begin{array}{lcl}
%	\disp \frac{1}{2}\diver \big( (1+w)\nabla |T^\varphi|^2\big) & = & \disp - (1+w)|C^\varphi|^2 + (1+w)|\nabla T^\varphi|^2 \\[0.3cm]
%		& & \disp + 2\left((1+w) C^\varphi_{kji}R^\varphi_{kj}\right)_i + \langle \nabla w, \nabla|T^\varphi|^2 \rangle + \frac{S^\varphi}{m-1} |T^\varphi|^2, 	
%	\end{array}
%	\end{equation}
Because of this last identity, we can compute 
	\[
	\begin{array}{lcl}
	\disp \frac{1}{2}\diver \big( (1+w)^2\nabla |T^\varphi|^2\big) & = & \dfrac{1+w}{2} \langle \nabla w, \nabla |T^\varphi|^2 \rangle + \dfrac{1+w}{2} \diver \big( (1+w)\nabla |T^\varphi|^2\big) \\[0.3cm]
	& = & \disp - (1+w)^2|C^\varphi|^2 + (1+w)^2|\nabla T^\varphi|^2 \\[0.3cm]
		& & \disp + 2(1+w)\left((1+w) C^\varphi_{kji}R^\varphi_{kj}\right)_i + \frac{3}{2} (1+w) \langle \nabla w, \nabla|T^\varphi|^2 \rangle \\[0.3cm]
		& & \disp + \frac{S^\varphi}{m-1}(1+w)|T^\varphi|^2. 	
	\end{array}
	\]
Integrating on $M$ and using the divergence theorem, 
	\[
	\begin{array}{lcl}
	0 & = & \disp - \int_M (1+w)^2|C^\varphi|^2 + \int_M (1+w)^2|\nabla T^\varphi|^2 + \frac{S^\varphi}{m-1} \int_M (1+w)|T^\varphi|^2 \\[0.3cm]
		& & \disp - 2 \int_M w_i(1+w) C^\varphi_{kji}R^\varphi_{kj} + \frac{3}{2} \int_M \langle (1+w) \nabla w, \nabla|T^\varphi|^2 \rangle. 	
	\end{array}
	\]
Integrating by parts with the aid of \eqref{eq1}, 
	\[
	\begin{array}{lcl}
	\disp \frac{3}{2} \int_M \langle (1+w) \nabla w, \nabla|T^\varphi|^2 \rangle & = & \disp - \frac{3}{4} \int_M |T^\varphi|^2 \Delta (1+w)^2 \\[0.3cm]
	& = & \disp \frac{3 S^\varphi}{2(m-1)} \int_M w(1+w)|T^\varphi|^2 - \frac{3}{2} \int_M |T^\varphi|^2|\nabla w|^2   	
	\end{array}
	\]
so we finally obtain
	\[
	\begin{array}{lcl}
	0 & = & \disp - \int_M (1+w)^2|C^\varphi|^2 + \int_M (1+w)^2|\nabla T^\varphi|^2 + \frac{S^\varphi}{m-1} \int_M (1+w)|T^\varphi|^2 \\[0.3cm]
		& & \disp - 2 \int_M w_i(1+w) C^\varphi_{kji}R^\varphi_{kj} + \frac{3 S^\varphi}{2(m-1)} \int_M w(1+w)|T^\varphi|^2 - \frac{3}{2} \int_M |T^\varphi|^2|\nabla w|^2 \, .
	\end{array}
	\]
We insert into \eqref{eq3} to remove the term with $C^\varphi_{kji}R^\varphi_{kj}$, and use $C^\varphi_{kki} = 0$, to get 
	\[
	\begin{split}
		\int_M |T^\varphi|^2 |\nabla w|^2 & = \int_M (1+w)^2 |\nabla T^\varphi|^2 + \frac{m}{m-2} \int_M (1+w)^2 \tr(T^\varphi)^3 \\
		& \phantom{=\;} + \frac{2}{m-1} \int_M (1+w)^2 S^\varphi|T^\varphi|^2 - \int_M (1+w)^2 \langle \mathscr W^\varphi(T^\varphi), T^\varphi \rangle \\
		& \phantom{=\;} - \frac{S^\varphi}{m-1} \int_M (1+w)|T^\varphi|^2 - \frac{1}{2} \int_M (1+w)^2 |C^\varphi|^2 + \int_M (1+w)^2|C^\varphi|^2 \\
		& \phantom{=\;} - \int_M (1+w)^2|\nabla T^\varphi|^2 - \frac{S^\varphi}{m-1} \int_M (1+w)|T^\varphi|^2 \\
		& \phantom{=\;} -\frac{3 S^\varphi}{2(m-1)} \int_M w(1+w)|T^\varphi|^2 + \frac{3}{2} \int_M |T^\varphi|^2|\nabla w|^2. 
	\end{split}		
	\]
%Simplifying, and using
By the Kazdan-Warner type identity in \eqref{div_T_int} together with assumption $\tau(\varphi) = 0$,
\[
	\int_M (1+w)|T^\varphi|^2 = 0, \quad \text{therefore} \quad \int_M w(1+w)|T^\varphi|^2 = \int_M (1+w)^2|T^\varphi|^2.
\]
Hence, by also using that $S^\varphi$ is constant, after some simplification we eventually get the following integral identity: 
	\begin{equation} \label{int_form2}		
	\begin{split}
		0 &	= \frac{1}{2} \int_M |T^\varphi|^2 |\nabla w|^2  + \frac{m}{m-2} \int_M (1+w)^2 \tr(T^\varphi)^3 +  \frac{1}{2} \int_M (1+w)^2 |C^\varphi|^2 \\
		& \phantom{=\;} + \frac{1}{2(m-1)} \int_M (1+w)^2 S^\varphi|T^\varphi|^2 - \int_M (1+w)^2 \langle \mathscr W^\varphi(T^\varphi), T^\varphi \rangle .
	\end{split}
\end{equation}

We are now ready for the

\begin{proof}[Proof of Theorem \ref{thm_Sph}]
As we have already observed, in our assumptions $S^\varphi$ is a positive constant and \eqref{int_form2} holds. From the proof of Proposition 3.22 in \cite{acr21} we have the validity of the following inequality:
	\begin{equation} \label{WT_ineq}
		\langle \mathscr W^\varphi(T^\varphi), T^\varphi \rangle \leq \sqrt{\frac{m-2}{2(m-1)}} |W^\varphi| |T^\varphi|^2 + \frac{\alpha}{m-2}|\di\varphi|^2 |T^\varphi|^2
	\end{equation}
	and from Okumura's lemma \cite{okumura}, 
	\begin{equation} \label{okumura}
		\tr(T^\varphi)^3 \geq - \frac{m-2}{\sqrt{m(m-1)}} |T^\varphi|^3 \, .
	\end{equation}
	Using \eqref{WT_ineq} and \eqref{okumura}, from \eqref{int_form2} we infer
	\begin{align*}
		0 & \geq \frac{1}{2} \int_M |T^\varphi|^2 |\nabla w|^2 + \frac{1}{2} \int_M (1+w)^2 |C^\varphi|^2 \\
		& \phantom{\geq\;} + \int_M (1+w)^2 \left( \frac{1}{2(m-1)} S^\varphi - \sqrt{\frac{m-2}{2(m-1)}} |W^\varphi| - \frac{\alpha}{m-2}|\di\varphi|^2 - \sqrt{\frac{m}{m-1}} |T^\varphi| \right) |T^\varphi|^2 \, .
	\end{align*}
	Observe that all terms appearing on the RHS of the above inequality are nonnegative, due to \eqref{Sph_pinch}. Hence, they must vanish, and in particular we have
	\begin{equation} \label{int_TDw}
		\int_M |T^\varphi|^2 |\nabla w|^2 = 0 \, .
	\end{equation}
	We claim that this, together with the $\varphi$-CPE equation, implies $T^\varphi\equiv 0$ on $M$. We postpone the proof of this claim to the subsequent Lemma \ref{lem_claim}. Assuming $T^\varphi\equiv0$ on $M$, the $\varphi$-CPE equation gives
	\begin{equation}
		\Hess(w) = - \frac{S^\varphi}{m(m-1)} \metric
	\end{equation}
	with $S^\varphi>0$. But then, Theorem A of Obata, \cite{obata62}, implies that $(M,\metric)$ is isometric to $\Sph^m(\kappa)$ with $\kappa$ as in \eqref{kappa}. In particular, $S = S^\varphi$ and, when $\alpha \neq 0$, this implies $|\di \varphi|^2 = 0$, hence $\varphi$ is constant.
%
%	
%	We claim that
%	\begin{equation} \label{claim}
%		\vol(\{ x \in M : w(x) = -1 \}) = 0 \, .
%	\end{equation}
%	Assuming the claim, from the above integral inequality and \eqref{Sph_pinch} we conclude that $T^\varphi\equiv 0$ on an open dense set of $M$ and therefore, by continuity, on $M$. $T^\varphi\equiv 0$ and the $\varphi$-CPE equation give
\end{proof}

\begin{lemma} \label{lem_claim}
	Let $(M,\metric)$ be a compact manifold of dimension $m\geq 3$, let $\varphi : (M,\metric) \to (N,\metric_N)$ be a smooth map and let $\alpha\in\RR$. Assume that $w$ is a non-constant solution to \eqref{phCPE} and 
	\begin{equation} \label{claim_int}
		\int_M |T^\varphi|^2 |\nabla w|^2 = 0 \, .
	\end{equation}
	Then $T^\varphi \equiv 0$ on $M$.
\end{lemma}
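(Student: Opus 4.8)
The plan is to exploit the fact that the integrand in \eqref{claim_int} is continuous and non-negative, and to combine the resulting pointwise dichotomy with the $\varphi$-CPE equation and the strict positivity of $S^\varphi$ provided by Proposition \ref{prop_DS}.

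First I would record that, since $M$ is compact and $w$ is non-constant, Proposition \ref{prop_DS} guarantees that $S^\varphi$ is a positive constant, so the traced equation \eqref{phCPE3} reads $\Delta w = -\frac{S^\varphi}{m-1} w$ with $\frac{S^\varphi}{m-1} > 0$. Next, as $|T^\varphi|^2|\nabla w|^2$ is a continuous, non-negative function whose integral over $M$ vanishes by \eqref{claim_int}, it must vanish identically on $M$; hence
\[
M = \{ T^\varphi = 0 \} \cup \{ \nabla w = 0 \},
\]
and in particular $T^\varphi \equiv 0$ on the open set $\{ \nabla w \neq 0 \}$, which is non-empty because $w$ is non-constant.

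It then remains to propagate the vanishing of $T^\varphi$ onto the critical set of $w$, and I would do this by contradiction. If $U \doteq \{ x \in M : T^\varphi(x) \neq 0 \}$ were non-empty, it would be a non-empty open set contained in $\{ \nabla w = 0 \}$ by the displayed covering; on any connected component $V$ of $U$ the gradient $\nabla w$ vanishes identically, so $w$ is constant there, say $w \equiv c$ on $V$. Feeding this into $\Delta w = -\frac{S^\varphi}{m-1} w$ and using $S^\varphi > 0$ forces $c = 0$, hence $w \equiv 0$ and $\Hess(w) \equiv 0$ on $V$. But then the first equation in \eqref{phCPE} reduces to $0 = T^\varphi$ on $V$, contradicting $V \subseteq U$. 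Therefore $U = \emptyset$, i.e. $T^\varphi \equiv 0$ on $M$.

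There is essentially no hard step here: the only points requiring a little care are that the vanishing of the integral upgrades to pointwise vanishing of $|T^\varphi|^2 |\nabla w|^2$ thanks to continuity, and that the constancy and strict positivity of $S^\varphi$ coming from Proposition \ref{prop_DS} is exactly what rules out a nonzero constant value of $w$ on a component of $U$. Alternatively, one could show that $\{ \nabla w = 0 \}$ has empty interior — otherwise $w$ would vanish on an open set and, being an eigenfunction of $\Delta$, would vanish identically by unique continuation, against $w$ non-constant — and then conclude $T^\varphi \equiv 0$ by density of $\{\nabla w \neq 0\}$ together with continuity of $T^\varphi$.
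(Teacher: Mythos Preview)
Your proof is correct, and in fact more direct than the paper's. Both arguments start identically: continuity upgrades \eqref{claim_int} to $|T^\varphi||\nabla w|\equiv 0$, so the open set $U=\{T^\varphi\neq 0\}$ lies in the critical set of $w$, and $w$ is constant on any connected component $V\subseteq U$. The divergence occurs at the step of identifying that constant. The paper, following \cite{hwang00}, first proves that the level set $C^w_{-1}=\{w=-1\}$ has measure zero---by observing that $\Hess(w)=\frac{S^\varphi}{m(m-1)}\metric>0$ at critical points on that level set, so they are isolated---and concludes $c\neq -1$; it then divides by $1+c$ in the full \eqref{phCPE} equation to see that $\Ricc^\varphi$ is a multiple of $\metric$ on $V$. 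You bypass this entirely: since $w\equiv c$ on the open set $V$, the traced equation $\Delta w=-\frac{S^\varphi}{m-1}w$ and $S^\varphi>0$ force $c=0$, and then \eqref{phCPE} with $w=0$, $\Hess(w)=0$ gives $T^\varphi=0$ directly. This renders the measure-zero discussion of $C^w_{-1}$ unnecessary for the present lemma. Your alternative via unique continuation for the eigenfunction $w$ is also valid and reaches the same conclusion.
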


\begin{proof}
Observe first that $S^\varphi$ is a positive constant by Proposition \ref{prop_DS}. Following arguments of \cite{hwang00}, we prove that the set
	$$
		C^w_{-1} = \{ x \in M : w(x) = -1 \}
	$$
has zero measure. Since $M$ is compact, $C^w_{-1}$ is compact. Let $\hat C_{-1}$ be the set of critical points of $w$ in $C^w_{-1}$, that is,
	$$
		\hat C_{-1} = \{ x \in C^w_{-1} : \di w_x = 0 \} \, .
	$$
	Note that $C^w_{-1} \setminus \hat C_{-1}$ is a hypersurface with possibly many connected components. Writing the \ref{phCPE} equation in the form \eqref{phiCPE_nelladim_1}, we see that if $p\in\hat C_{-1}$ then for $v\in T_p M$, $v\neq 0$ it holds
	$$
		\Hess(w)(v,v) = \frac{S^\varphi}{m(m-1)} \langle v,v\rangle > 0
	$$
since $S^\varphi>0$. Thus $p$ is a non-degenerate critical point of $s$. Hence the points of $\hat C_{-1}$ are isolated. Since $C^w_{-1}$ is compact, $\hat C_{-1}$ is a finite set. In particular, since $m\geq 3$, $C^w_{-1} \setminus \hat C_{-1}$ is a connected hypersurface and
	$$
		C^w_{-1} = \hat C_{-1} \cup ( C^w_{-1} \setminus \hat C_{-1} )
	$$
	has measure $0$.
	
	We now turn to the proof that $T^\varphi\equiv0$. Since $|T^\varphi||\nabla w| \geq 0$ is a continuous function, \eqref{claim_int} implies that $|T^\varphi||\nabla w| \equiv 0$ on $M$. Hence, the set
	$$
		E = \{ x \in M : T^\varphi_x \neq 0 \}
	$$
	is contained in the set $\{ x \in M : \di w_x = 0 \}$ of critical points of $w$. Note that $E$ is open. Suppose, by contradiction, that $E\neq\emptyset$. Let $x\in E$ and let $U\subseteq E$ be a connected neighbourhood of $x$. Since $\nabla w\equiv 0$ on $E$, there exists a constant $c\in\RR$ such that $w\equiv c$ on $U$. Since $U$ has positive measure, by the previous observation we have $c\neq -1$. Rewriting the $\varphi$-CPE equation in the form
	$$
		(1+w) \Ricc^\varphi - \Hess(w) = \left( \frac{S^\varphi}{m} + \frac{wS^\varphi}{m-1} \right) \metric
	$$
	and using the fact that $w\equiv c\neq -1$ on $U$, we see that
	$$
		\Ricc^\varphi \equiv \frac{S^\varphi}{1+c} \frac{(1+c)m-1}{m(m-1)} \metric \qquad \text{on } \, U \, .
	$$
	In particular, $\Ricc^\varphi$ is a multiple of $\metric$ on $U$ and then $T^\varphi \equiv 0$ on $U$, contradiction.
\end{proof}

\vspace{0.3cm}

\noindent \textbf{Acknowledgements.} The authors are grateful to the anonymous referee for his/her careful work and useful comments, which led to significant improvements in the paper.

\bibliographystyle{plain}

\begin{thebibliography}{99}
	\bibitem{a21} A. Anselli, \emph{Bach and Einstein's equations in presence of a field.} Int. J. Geom. Methods Mod. Phys. 18 (2021), no. 5, Paper No. 2150077, 68 pp. MR4254759

	\bibitem{acr21} A. Anselli, G. Colombo, M. Rigoli, \emph{On the geometry of Einstein-type structures.} Nonlinear Anal. \textbf{204} (2021), paper no. 112198, 84 pp. MR4184679
	
	\bibitem{bal19} H. Baltazar, \emph{On critical point equation of compact manifolds with zero radial Weyl curvature.} Geom. Dedicata \textbf{202} (2019), 337--355. MR4001820
	
	\bibitem{bal20} H. Baltazar, \emph{Besse conjecture for compact manifolds with pinched curvature.} Arch. Math. (Basel) \textbf{115} (2020), no. 2, 229--239. MR4118968
	
	\bibitem{bbbv20} H. Baltazar, A. Barros, R. Batista, E. Viana, E. \emph{On static manifolds and related critical spaces with zero radial Weyl curvature.} Monatsh. Math. 191 (2020), no. 3, 449--463.

	\bibitem{bmv} E. Barbosa, H. Mirandola, F. Vitorio, \emph{Rigidity theorems on conformal class of compact manifolds with boundary.} J. Math. Anal. Appl. 437 (2016), no. 1, 629--637. 
	
	\bibitem{be18} A. Barros and I. Evangelista, \emph{On the critical metrics of the total scalar curvature functional.} Publ. Math. Debrecen 92 (2018), no. 1-2, 147--158.
	
	\bibitem{blr15} A. Barros, B. Leandro, E. Ribeiro, Jr. \emph{Critical metrics of the total scalar curvature functional on 4-manifolds.} Math. Nachr. \textbf{288} (2015), no. 16, 1814--1821. MR3417871
	
%	\bibitem{br14} A. Barros, E. Ribeiro, Jr. \emph{Critical point equation on four-dimensional compact manifolds.} Math. Nachr. 287 (2014), no. 14-15, 1618-1623.
	
	\bibitem{besse87} A. L. Besse, \emph{Einstein manifolds.} Ergebnisse der Mathematik und ihrer Grenzgebiete (3), \textbf{10}. Springer-Verlag, Berlin, 1987. xii+510 pp. MR0867684
	
	
	\bibitem{bmn11} S. Brendle, F. C. Marques, A. Neves, \emph{Deformations of the hemisphere that increase scalar curvature.} Invent. Math. \textbf{185} (2011), no. 1, 175--197. MR2810799
	
	\bibitem{chy12} J. Chang, S. Hwang, G. Yun, \emph{Critical point metrics of the total scalar curvature.} Bull. Korean Math. Soc. \textbf{49} (2012), no. 3, 655--667. MR2963428
	
%	\bibitem{cmar21} G. Colombo, L. Mari, M. Rigoli, \emph{The geometry of $\varphi$-curvatures and Einstein-type structures.} In preparation.
	
%	\bibitem{cmr21} G. Colombo, M. Mariani, M. Rigoli, [$\dots$] In preparation.
	
	\bibitem{cem13} J. Corvino, M. Eichmair, P. Miao, \emph{Deformation of scalar curvature and volume.} Math. Ann. 357 (2013), no. 2, 551--584.
	
	\bibitem{el88} J. Eells, L. Lemaire, \emph{Another report on harmonic maps.} Bull. London Math. Soc. \textbf{20} (1988), no. 5, 385--524. MR0956352
	
	\bibitem{fm75} A.E. Fischer, J.E. Marsden, \emph{Deformations of the scalar curvature.} Duke Math. J. \textbf{42} (1975), no. 3, 519--547. MR0380907
	
	\bibitem{hw06} F. Hang, X. Wang, \emph{Rigidity and non-rigidity results on the sphere.} Comm. Anal. Geom. \textbf{14} (2006), no. 1, 91--106. MR2230571
	
	\bibitem{he21} H. He, \emph{Critical metrics of the volume functional on three-dimensional manifolds.} Available at arXiv:2101.05621
	
	\bibitem{herz16} M. Herzlich, \emph{Computing asymptotic invariants with the Ricci tensor on asymptotically flat and asymptotically hyperbolic manifolds.} Ann. Henri Poincar\'e \textbf{17} (2016), no. 12, 3605--3617. MR3568027
	
	\bibitem{hwang00} S. Hwang, \emph{Critical points of the total scalar curvature functional on the space of metrics of constant scalar curvature.} Manuscripta Math. \textbf{103} (2000), no. 2, 135--142. MR1796310
	
	\bibitem{hwangyu} S. Hwang, G. Yu, \emph{Besse conjecture with positive isotropic curvature.} Available at arXiv:2103.15482. 
	
	\bibitem{karp} L. Karp, \emph{Subharmonic functions on real and complex manifolds.} Math. Z. 179 (1982)
535--554.	

	\bibitem{laf83} J. Lafontaine, \emph{Sur la g\'eom\'etrie d'une g\'en\'eralisation de l'\'equation diff\'erentielle d'Obata.} J. Math. Pures Appl. (9) \textbf{62} (1983), no. 1, 63--72. MR0700048
	
	\bibitem{blr} J. Lafontaine, L. Rozoy, \emph{Courbure scalaire et trous noirs.} (French) [Scalar curvature and black holes] S\'eminaire de Th\'eorie Spectrale et G\'eom\'etrie, Vol. 18, Ann\'ee 1999-2000, 69--76, S\'emin. Th\'eor. Spectr. G\'eom., 18, Univ. Grenoble I, Saint-Martin-d'H\`eres, 2000.

	\bibitem{mr20} L. Marini, M. Rigoli, \emph{On the geometry of $\varphi$-curvatures.} J. Math. Anal. Appl. \textbf{483} (2020), no. 2, 123657, 22 pp. MR4037588
	
	\bibitem{mt17} P. Miao, L.-F. Tam, \emph{Some functionals on compact manifolds with boundary.} Math. Z. \textbf{286} (2017), no. 3-4, 1525--1537. MR3671587
	
	\bibitem{mul12} R. M\"uller, \emph{Ricci flow coupled with harmonic map flow.} Ann. Sci. \'Ec. Norm. Sup\'er. (4) \textbf{45} (2012), no. 1, 101--142. MR2961788
	
	\bibitem{obata62} M. Obata, \emph{Certain conditions for a Riemannian manifold to be isometric with a sphere.} J. Math. Soc. Japan \textbf{14} (1962), 333--340. MR0142086
	
	\bibitem{okumura} M. Okumura, \emph{Hypersurfaces and a pinching problem for the second fundamental tensor.}  Amer. J. Math. 96 no. 1, 1974, pp. 207--213.
		
	\bibitem{qy13} J. Qing, W. Yuan, \emph{A note on static spaces and related problems.} J. Geom. Phys. \textbf{74} (2013), 18--27. MR3118569
	
	\bibitem{qy16} J. Qing, W. Yuan, \emph{On scalar curvature rigidity of vacuum static spaces.} Math. Ann. \textbf{365} (2016), no. 3-4, 1257--1277. MR3521090
	
	\bibitem{wang16} L. F. Wang, \emph{On Ricci-harmonic metrics.} Ann. Acad. Sci. Fenn. Math. \textbf{41} (2016), no. 1, 417--437. MR3467719
	
	\bibitem{y17} W. Yuan, \emph{Brown-York mass and compactly supported conformal deformations of scalar curvature.} J. Geom. Anal. 27 (2017), no. 1, 797--816. MR3606569
		
	\bibitem{ych14} G. Yun, J. Chang, S. Hwang, \emph{Total scalar curvature and harmonic curvature.} Taiwanese J. Math. \textbf{18} (2014), no. 5, 1439--1458. MR3265071
\end{thebibliography}

\end{document}